\documentclass[11pt]{article}

\usepackage[pagebackref,colorlinks=true,urlcolor=blue,linkcolor=red,citecolor=red]{hyperref}

\usepackage{epsfig,graphics, graphicx}
\usepackage{subcaption, comment, bm}
\usepackage[percent]{overpic}
\usepackage{float}
\usepackage{amssymb,bm}
\usepackage{amsthm}
\usepackage{color}
\usepackage[]{amsmath}
\usepackage[]{amsfonts}
\usepackage[]{fancyhdr}
\usepackage[]{graphicx,wrapfig}
\usepackage{enumitem}
\usepackage[utf8]{inputenc}
\usepackage[T1]{fontenc}
\usepackage{mathtools}
\usepackage{array}
\hoffset 0.0cm
\textwidth 16.0cm
\oddsidemargin 0.0cm
\evensidemargin 0.0cm

 \usepackage{pdfsync,verbatim,stmaryrd}
 \newcommand{\Beq}{\begin{equation}}
 \newcommand{\Eeq}{\end{equation}}
 \newcommand{\beq}{\begin{equation*}}
 \newcommand{\eeq}{\end{equation*}}
 \newcommand{\bal}{\begin{align}}
 \newcommand{\eal}{\end{align}}

 \newcommand{\Rc}{\mathcal{R}}
 
 \newcommand{\Tc}{\mathcal{T}}

 \newcommand{\Wc}{\mathcal{W}}

 \newcommand{\Rb}{\mathbb{R}}
 \newcommand{\Sb}{\mathbb{S}}

 \renewcommand{\o}{\omega}
 \usepackage{amsfonts}
 \newcommand{\FR}{\mathbb{R}} 
 
 \DeclareMathAlphabet{\bi}{OML}{cmm}{b}{it}
 \DeclareMathAlphabet{\bcal}{OMS}{cmsy}{b}{n}
 \DeclareMathAlphabet{\brmn}{OT1}{cmr}{bx}{n}
 \usepackage{amsopn}
 \usepackage{amsmath,amsthm,amssymb}

\newtheorem{remark}{Remark}
\newtheorem{proposition}{Proposition}
\newtheorem{exmp}{Example}

 \newtheorem{theorem}{Theorem}
 
 \newtheorem{lemma}{Lemma}

 \newtheorem{definition}{Definition}

\title{\vspace{-1cm} Inversion of a restricted transverse ray transform with sources on a curve}

\author{Rohit Kumar Mishra\thanks{Department of Mathematics, Indian Institute of Technology, Gandhinagar, Gujarat, India. \url{rohit.m@iitgn.ac.in}, \url{rohittifr2011@gmail.com}} \and Chandni Thakkar\thanks{Department of Mathematics, Indian Institute of Technology, Gandhinagar, Gujarat, India. \url{thakkar_chandni@iitgn.ac.in}}}

\begin{document}
\maketitle
\begin{abstract}
 In this paper, a restricted transverse ray transform acting on vector and symmetric $m$-tensor fields is studied. We developed inversion algorithms using restricted transverse ray transform data to recover symmetric $m$-tensor fields in $\FR^3$ and vector fields in $\FR^n$. We restrict the transverse ray transform to all lines going through a fixed curve $\gamma$ that satisfies the Kirillov-Tuy condition. We show that the known restricted data can be used to reconstruct a specific weighted Radon transform of the unknown vector/tensor field's components, which we then use to explicitly recover the unknown field. 
 \end{abstract}
\section{Introduction}\label{Sec:Introduction}
The question of inversion of various types of integral transforms, such as longitudinal ray transform, momentum ray transforms, and transverse ray transform (TRT), has attracted a lot of attention due to their wide range of applications in various imaging fields. The problem at hand is the reconstruction of a symmetric $m$-tensor field from the knowledge of its integral transforms. In this work, we solve the following two inversion problems using the transverse ray transform data:
\begin{itemize}
\item Reconstruction of a symmetric $m$-tensor field from the knowledge of its TRT (for lines intersecting a fixed curve $\gamma$ satisfying the Kirillov-Tuy condition) in $\mathbb{R}^3$
\item Reconstruction of a vector field from the knowledge of its TRT (for lines intersecting a fixed curve $\gamma$ satisfying the Kirillov-Tuy condition) in $\mathbb{R}^n$
\end{itemize}
The transverse ray transform was introduced by Vladimir Sharafutdinov \cite{Sharafutdinov_1994} to study questions related to polarization. In recent years, TRT has appeared in various fields, such as in recovering strain field for polycrystalline materials from the X-ray diffraction data \cite{Lionheart_Withers_2015}, to approximate the change in the polarization state of light from truncated TRT \cite{Sharafutdinov_Lionheart_2009}, in Histogram tomography \cite{Lionheart_2020}, and many other areas \cite{Griesmaier_TRT_2018, Holman_2009, Holman_2013, Sharafutdinov_Novikov_2007,  Sharafutdinov_1994}. Let us start with some of the necessary notations before we define the transverse ray transform and state the main results of the article. 

Let $\mathit{S}^m_n = \mathit{S}^m(\mathbb{R}^n)$ denote the space of covariant symmetric $m$-tensor fields in $\mathbb{R}^n$ and let $C_c^\infty(\mathit{S}^m_n)$ denote the space of smooth compactly supported symmetric $m$-tensor fields in $\mathbb{R}^n$. In local coordinates, $f \in C_c^\infty(\mathit{S}^m_n)$ can be expressed as 
\begin{equation*}
    f(x) = f_{i_1\dots i_m}(x) \,dx^{i_1} \dots \,dx^{i_m},
\end{equation*}
where $f_{i_1\dots i_m}(x)$ are compactly supported smooth functions which are symmetric in their indices. Throughout this article, we assume Einstein summation convention for repeated indices. The space of straight lines in $\mathbb{R}^n$ is  parametrized by 
\[T\mathbb{S}^{n - 1} = \left\{(\xi, x) \in \mathbb{S}^{n - 1} \times \mathbb{R}^n : \left<x, \xi\right> = 0\right\} = \big\{(\xi, x) \in \mathbb{R}^{n} \times \mathbb{R}^n : \left<x, \xi\right> = 0, \ |\xi| = 1\big\} .\]
Given a $( \xi,x) \in T\mathbb{S}^{n - 1}$, we can find a unique straight line $\left\{ x + t \xi : t \in \mathbb{R}\right\}$ in $\mathbb{R}^n$ passing through $x$ and in the direction $\xi$. Further, let us denote by
\[T\mathbb{S}^{n - 1} \oplus T\mathbb{S}^{n - 1} = \left\{(\xi, x, y) \in \mathbb{S}^{n - 1} \times \mathbb{R}^n \times \mathbb{R}^n : \left<x, \xi\right> = 0, \left<y, \xi\right> = 0\right\}.\]
the Whitney sum of $T\mathbb{S}^{n - 1}$ with itself. Here $\langle \cdot, \cdot \rangle$ denotes the usual dot product in $\mathbb{R}^n$.

\begin{definition}[\cite{Microlocal_2021,Sharafutdinov_1994}]\label{def:TRT}
    The transverse ray transform $\mathcal{T} : C_c^\infty(S^m_n) \rightarrow C^\infty(T\mathbb{S}^{n - 1} \oplus T\mathbb{S}^{n - 1})$ is the bounded linear map defined by 
    \[\mathcal{T}f (\xi, x, y) = \int_\mathbb{R} \left< f (x + t \xi), y^{\odot m} \right> \,dt,\]
    where $y^{\odot m}$ denotes the $m^{th}$ symmetric tensor product of $y$ and $\left< f(x), y^{\odot m} \right> = f_{i_1\dots i_m}(x) y^{i_1} \cdots y^{i_m}$.
\end{definition}\
To get a more explicit and workable definition of TRT, we use the spherical coordinates to parameterize the direction of integration.
To achieve this, we start with a brief discussion about the spherical coordinates in $\FR^n$ (also known as hyperspherical coordinates). For every $\xi \in \mathbb{S}^{n - 1}$, there exists $\varphi_1, \varphi_2, \dots, \varphi_{n - 2} \in [0, \pi]$ and $\varphi_{n - 1} \in [0, 2\pi)$ such that $\xi$ can be represented as follows:
\begin{align*}
    \xi &= \begin{bmatrix}
            \sin\varphi_1 \sin\varphi_2 \dots \sin\varphi_{n - 2} \sin\varphi_{n - 1}\\
            \sin\varphi_1 \sin\varphi_2 \dots \sin\varphi_{n - 2} \cos\varphi_{n - 1}\\
            \vdots \\
            \sin\varphi_1 \cos\varphi_2\\
            \cos\varphi_1
        \end{bmatrix}.
\end{align*}
Consider the orthonormal frame $\displaystyle \left\{\xi, \eta_1, \dots, \eta_{n-1} \right\}$ in $\Rb^n$ with  $\eta_j$ (for $j = 1, \dots , n-1$) defined as follows:
\begin{align}\label{eq:orthogonal frame}
    \eta_{1} &= \begin{bmatrix}
        \cos\varphi_1 \sin\varphi_2 \dots \sin\varphi_{n - 2} \sin\varphi_{n - 1}\\
        \cos\varphi_1 \sin\varphi_2 \dots \sin\varphi_{n - 2} \cos\varphi_{n - 1}\\
        \vdots \\
        \cos\varphi_1 \cos\varphi_2 \\
        -\sin\varphi_1
    \end{bmatrix}, \nonumber\\
    \eta_{2} &= \begin{bmatrix}
        \cos\varphi_2 \dots \sin\varphi_{n - 2} \sin\varphi_{n - 1}\\
        \cos\varphi_2 \dots \sin\varphi_{n - 2} \cos\varphi_{n - 1}\\
        \vdots \\
        -\sin\varphi_2\\
        0
    \end{bmatrix},\nonumber \\
    &\qquad \vdots\nonumber\\
    \eta_{n - 1} &= \begin{bmatrix}
        \cos\varphi_{n - 1}\\
        -\sin\varphi_{n - 1}\\
        0\\
        \vdots\\
        0\\
        0
    \end{bmatrix}.
\end{align}
For notational convenience and calculation simplifications, we will use $\eta_{1} = \xi_\alpha$ and $\eta_{2} = \xi_\beta$, that is, we use  $\displaystyle \left\{\xi,  \xi_{\alpha}, \xi_{\beta} \right\}$ as an orthonormal frame for $\Rb^3$. Now, we define the vectorial versions of the TRT for 3-dimensional symmetric $m$-tensor fields and $n$-dimensional vector fields separately. It is known that this vectorial definition of TRT is equivalent (see \cite{Microlocal_2021}) to the  Definition \ref{def:TRT} defined above.
\begin{definition}[\cite{Microlocal_2021}]\label{def:vector version of TRT}
For $0 \leq i \leq m,$ define $\mathcal{T} = (\mathcal{T}_i) : C_c^\infty(S^m_3) \rightarrow (C^\infty(T\mathbb{S}^2))^{m + 1}$ by:
\begin{equation} \label{eq:vector version of TRT}
\mathcal{T}_if(x, \xi) = \int_\mathbb{R} \left<f(x + t \xi), \xi_\alpha^{\odot i} \odot \xi_\beta^{\odot (m-i)}\right> \,dt
\end{equation}
where $\odot$ denotes the symmetrized tensor product and $\xi_\alpha$ and $\xi_\beta$ denote the orthonormal vectors associated to $\xi$ as defined above.
\end{definition}
\begin{remark}
From the definition of $\Tc_i$, we observe 
$$\Tc_i f(x, \xi) =  (-1)^m \Tc_i f(x, -\xi),$$
which means $\Tc_i$ is an even/odd function in the $\xi$ variable if the order of the tensor is even/odd.
\end{remark}
\begin{definition}\label{def:vector version of TRT n-D}
For $1 \leq i \leq n - 1,$ define $\mathcal{T} = (\mathcal{T}_i) : C_c^\infty(S^1_n) \rightarrow (C^\infty(T\mathbb{S}^{n - 1}))^{n - 1}$ by:
\begin{equation} \label{eq:vector version of TRT n-D}
\mathcal{T}_if(x, \xi) = \int_\mathbb{R} \left<f(x + t \xi), \eta_{i}\right> \,dt = \int_\mathbb{R} f_j(x + t \xi) \eta_{i}^j \,dt
\end{equation}
where $\eta_{i}$'s are defined above and are orthogonal to $\xi$.
\end{definition}
Note that we are using the same symbol for TRTs defined above. It will be clear from the context which definition we are using depending on whether $f$ is a vector field in $\Rb^n$ or $f$ is a symmetric $m$-tensor field in $\Rb^3$.

In simple words, TRT integrates the components of a tensor field that are orthogonal to the line of integration $\left\{ x + t \xi: t \in \mathbb{R}\right\}$. The problem of interest here is to recover the unknown field from the knowledge of its TRT. The reconstruction problem, under consideration for TRT and for integral transforms over straight lines, is overdetermined for $n \geq 3$ since the space of lines in $\mathbb{R}^n$ is $(2n - 2)$-dimensional, and we are trying to recover a $n$-dimensional object. The natural question is then to ask if the reconstruction is possible with specific $n$-dimensional restricted data. Such restricted data problems for scalar functions are well studied by many authors; for instance, see \cite{Scalar_restricted_1986, Scalar_restricted_1980, Grangeat_scalar_incomplete_data, Uhlmann_Greenleaf_1989, Kirillov_1961,  Palamodov_scalar_incomplete_data, Tuy_1983}. The problems for vector fields and higher-order tensor fields are also considered in various settings, see \cite{Denisjuk_2006, Denisjuk_1994, Derevtsov2011,derevtsov_Svetov, Katsevich2007, Katsevich2013, Katsevich_Schuster, Microlocal_2018, Lan_thesis_restricted_transform_1999, Louis_2022, Rohit_2020, Mukhometov_1985, Schuster2000,  Schuster2001} and references therein. In this work, we assume the TRT is given for lines intersecting a fixed curve $\gamma$ in $\mathbb{R}^n$, which corresponds to $n$-dimensional data. To the best of authors' knowledge, there are only two works available with restricted TRT; the first one is \cite{Lionheart_Naeem_2016}, where authors have developed a slice-by-slice filtered back projection reconstruction algorithm for symmetric 2-tensor fields in $\mathbb{R}^3$ while the second one  \cite{Microlocal_2021} deals with microlocal inversion for symmetric $m$-tensor fields in $\mathbb{R}^3$. The articles \cite{Anuj_TRT_Support, UCP_2022, Holman_2013,  TRT_injectivity} address the injectivity questions for TRT in different settings.  For more literature related to transverse ray transform, please refer to \cite{Griesmaier_TRT_2018, Joonas_2023, Lionheart_Withers_2015, Sharafutdinov_Lionheart_2009,  Sharafutdinov_Novikov_2007, Sharafutdinov_Polarization}, and the references therein. 

The article is organized as follows. In Section \ref{Sec:Notations}, we describe the Kirillov-Tuy condition with an example and state the two main results of this article. Sections \ref{sec:proof of th 1} and \ref{sec:proof of th 2} are devoted to proving the two main theorems stated in section \ref{Sec:Notations}. Finally, we conclude this paper with an appendix in Section \ref{sec:appendix}.
and the acknowledgments in Section \ref{sec:acknowledge}.

\section{Some notations and statement of main theorems}\label{Sec:Notations}
The fixed curve $\gamma$, where we restrict the lines to collect the TRT data, satisfies a specific condition known as the Kirillov-Tuy condition. This condition on the curve was introduced for the scalar case in \cite{Kirillov_1961,Tuy_1983} and later it was defined for the higher order tensor fields in \cite{Denisjuk_2006,Vertgeim_2000}. We start this section by defining \textit{generic vectors}, which are important for defining the Kirillov-Tuy condition.
\begin{definition}[Generic vectors \cite{Denisjuk_2006}]
    Let us say that the vectors $e_1, \dots, e_{\nu(m, n)} \in \FR^{n - 1}$, with $\nu(m,n) = \binom{m + n - 2}{m}$, are generic, if any symmetric tensor $f$ in $\FR^{n - 1}$ of order $m$ is uniquely defined by the values $\left<f, e_1^{\odot m}\right>, \dots, \left<f, {e_{\nu(m, n)}^{\odot m}}\right>$.
\end{definition}

\begin{definition}[Kirillov-Tuy condition]
Fix a domain $B \subset \mathbb{R}^n$. We say a curve $\gamma$ satisfies the Kirillov-Tuy condition of order $m$ if for almost any hyperplane $H_{\omega,p} = \{x \in \mathbb{R}^n : \left<\omega, x\right> = p\}$ intersecting the domain $B$, there is a set of points $\gamma_1,\dots,\gamma_{\nu(m,n)} \in H_{\omega,p} \cap \gamma$, which locally smoothly depends on $(\omega, p)$, such that for almost every $x \in H_{\omega,p} \cap B$, the vectors $x - \gamma_1,\dots,x - \gamma_{\nu(m,n)}$ are generic.
\end{definition}
\begin{exmp}\cite{Vertgeim_2000}
Consider $B = B(0, r) \subset \mathbb{R}^3$ and $\gamma$ is a union of 3 orthogonal great circles on the sphere $S(0, R)$ with $ R > \sqrt{3}r.$ Then every hyperplane, passing through $B$, will intersect $\gamma$ at least two different  points $\gamma_1 \ \& \ \gamma_2$ and for almost every $x$ in the hyperplane, the vectors $x - \gamma_1 \ \& \ x -\gamma_2$ are linearly independent. Hence $\gamma$ satisfies the Kirillov-Tuy condition of order 1.
\end{exmp}
\noindent For our second objective of recovering a vector field in $\FR^n$ from the restricted TRT data, a weaker version of the Kirillov-Tuy condition is required, which is defined below:
\begin{definition}[Modified Kirillov-Tuy condition]
Fix a domain $B \subset \mathbb{R}^n$. We say a curve $\gamma$ satisfies the modified Kirillov-Tuy condition if for almost any hyperplane $H_{\omega,p} = \{x \in \mathbb{R}^n : \left<\omega, x\right> = p\}$ intersecting the domain $B$, there are points $\gamma_1, \gamma_2 \in H_{\omega,p} \cap \gamma$, which locally smoothly depends on $(\omega, p)$, such that for almost every $x \in H_{\omega,p} \cap B$, the vectors $x - \gamma_1,$ and $x - \gamma_2$ are linearly independent.
\end{definition}
\begin{definition}[\cite{Vertgeim_2000}]
A curve $\gamma$ is said to encompass a bounded domain $B$, if $\gamma \cap B = \emptyset$ and for each $a \in \gamma$ and $\xi \in \mathbb{R}^n\symbol{92} \left\{0\right\}$, at most one of the rays $\{a + t\xi : t \geq 0\}$ and $\{a + t\xi : t \leq 0\}$ intersects $B$.
\end{definition}
\begin{remark}
Let $\gamma$ be a curve that encompasses the support of a symmetric $m$-tensor field $f$. Then $\mathcal{T}_if$ takes the following form, if $\{a + t\xi : t \geq 0\}$ intersects $B$, 
\[\mathcal{T}_if (a, \xi) = \mathcal{T}_i^+ f(a, \xi) = \int_0^\infty \left<f(a + t \xi), \xi_\alpha^{\odot i} \odot \xi_\beta^{\odot (m-i)}\right> \,dt. \]
\noindent Otherwise, if $ \{a + t\xi : t \leq 0\} = \{a - t\xi : t \geq 0\}$ intersects $B$, we get the following expression for $\mathcal{T}_if$:
\begin{align*}
\mathcal{T}_i^- f(a, \xi) &= \int_0^\infty \left<f(a - t \xi), \xi_\alpha^{\odot i} \odot \xi_\beta^{\odot (m-i)}\right> \,dt\\
&= {(-1)}^m \int_0^\infty \left<f(a + t (-\xi)), (-\xi)_\alpha^{\odot i} \odot (-\xi)_\beta^{\odot (m-i)}\right> \,dt\\
&= {(-1)}^m \mathcal{T}_i^+ f(a, -\xi).
\end{align*}
From this relation, we see that knowing $\displaystyle \left\{\mathcal{T}_i^+ f (a, \xi): a \in \gamma \mbox{ and } \xi\in\Sb^{n-1}\right\}$ is same as knowing $\left\{\mathcal{T}_i^- f (a, \xi): a \in \gamma \mbox{ and } \xi\in\Sb^{n-1}\right\}$. Hence, for the rest of the paper, we will only work with $\mathcal{T}_i^+ f$ and call it $\mathcal{T}_i f$. To summarise, for $a \in \gamma$ and $\xi \in \mathbb{S}^{n - 1}$, we will use the following expression for $\mathcal{T}_if$: \[\mathcal{T}_i f(a, \xi) = \int_0^\infty \left<f(a + t \xi), \xi_\alpha^{\odot i} \odot \xi_\beta^{\odot (m-i)}\right> \,dt.\] 

\noindent A similar change in the expression appears for the vector field case. 
\end{remark}

\begin{remark}
The TRT defined in definition \ref{def:vector version of TRT} (definition \ref{def:vector version of TRT n-D}) can be extended to the space $\mathbb{R}^3 \times \mathbb{R}^3\setminus \{0\}$ ($\mathbb{R}^n \times \mathbb{R}^n\setminus \{0\}$), where we remove the restriction on $\xi$ to be a unit vector. Let us denote this extended operator by $\widetilde{\mathcal{T}}_if$. The operators $\widetilde{\mathcal{T}}_if$ and $\mathcal{T}_if$ are equivalent and are related in the following way :
\[\widetilde{\mathcal{T}}_if (x, \xi) = |\xi|^{m - 1} \mathcal{T}_if \left(x, \frac{\xi}{|\xi|}\right).\]
This extension allows us to define the partial derivatives $\frac{\partial}{\partial \xi_i}$. The above expression shows that knowing one of the operators gives all the information about the other operator. Hence, we will not distinguish between $\widetilde{\mathcal{T}}_if$ and $\mathcal{T}_if$, and just use the notation $\mathcal{T}_if$ in the further discussion.
\end{remark}

\noindent Now, we are ready to state the main theorems of this article.
\begin{theorem} \label{th:Main Theorem}
    Let $f$ be a symmetric $m$-tensor field in $\mathbb{R}^3$, which is supported in $B \subset \mathbb{R}^3$. Assume that a curve $\gamma \subset \mathbb{R}^3$, satisfying the Kirillov-Tuy condition of order $m$, encompasses $B$ and the transverse ray transform $\mathcal{T}_i f$, for $ i = 0, \dots, m$ is known for all lines intersecting the curve $\gamma$. Then the tensor field $f$ can be uniquely reconstructed in terms of $\Tc_i f$.
\end{theorem}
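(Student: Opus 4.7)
My plan follows the Kirillov--Tuy paradigm: convert the restricted cone-beam TRT data into weighted Radon-type integrals on $2$-planes, exploit the Kirillov--Tuy genericity to isolate a specific weighted Radon transform of the scalar components of $f$, and finally invert to recover $f$. Fix a generic $2$-plane $H_{\omega,p}\subset\FR^3$ meeting $B$, with unit normal $\omega$, and let $\gamma_1,\dots,\gamma_{m+1}\in H_{\omega,p}\cap\gamma$ be the $\nu(m,3)=m+1$ Kirillov--Tuy sources. At each $\gamma_j$, I restrict attention to directions $\xi\in\Sb^2\cap\omega^\perp$ so that the line $\gamma_j+t\xi$ stays inside $H_{\omega,p}$. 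For such $\xi$, both $\omega$ and $\omega\times\xi$ lie in $\xi^\perp$, so the hyperspherical frame $(\xi_\alpha,\xi_\beta)$ on $\xi^\perp$ differs from $(\omega,\omega\times\xi)$ only by a rotation of some explicit angle $\theta(\xi)$. Expanding $\xi_\alpha^{\odot i}\odot\xi_\beta^{\odot(m-i)}$ in the basis $\{\omega^{\odot(m-k)}\odot(\omega\times\xi)^{\odot k}\}_{k=0}^{m}$ gives an $(m+1)\times(m+1)$ change-of-basis matrix $M(\theta)$, invertible as the degree-$m$ symmetric power of an $\mathrm{SO}(2)$ rotation. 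Inverting $M(\theta)$ extracts from $\{\Tc_i f(\gamma_j,\xi)\}_{i=0}^{m}$ the scalar line integrals
\begin{equation*}
\Lc_j^{(k)}(\xi):=\int_0^\infty \langle f(\gamma_j+t\xi),\,\omega^{\odot(m-k)}\odot(\omega\times\xi)^{\odot k}\rangle\,dt,\qquad k=0,\dots,m.
\end{equation*}

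Integrating $\Lc_j^{(k)}$ over $\xi\in\Sb^1\cap\omega^\perp$ and switching to $x=\gamma_j+t\xi\in H_{\omega,p}$ (Jacobian $t\,dt\,d\xi=dx$) produces the $(m+1)^2$ weighted $2$-plane integrals
\begin{equation*}
\Rc_j^{(k)}(\omega,p)=\int_{H_{\omega,p}}\frac{\langle f(x),\,\omega^{\odot(m-k)}\odot(\omega\times(x-\gamma_j))^{\odot k}\rangle}{|x-\gamma_j|^{k+1}}\,dx.
\end{equation*}
Because $\omega\times\cdot$ is a rotation of $H_{\omega,p}$, the vectors $\omega\times(x-\gamma_j)$ remain generic at a.e.\ $x\in H_{\omega,p}$; together with the $\omega$-factors, the $(m+1)^2$ tensors $\omega^{\odot(m-k)}\odot(\omega\times(x-\gamma_j))^{\odot k}$ span $S^m(\FR^3)$ at a.e.\ $x\in H_{\omega,p}$. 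Taking an appropriate linear combination of the $\Rc_j^{(k)}$'s with coefficients depending only on $\omega$ and on the $\gamma_j(\omega,p)$'s then yields, for each $\omega\in\Sb^2$, a single weighted plane integral $\int_{H_{\omega,p}} W(x;\omega,p)\,\langle f(x),\omega^{\odot m}\rangle\,dx$ with a known weight $W$; analogous combinations recover the $\omega$-mixed companions. This is the \emph{specific weighted Radon transform} promised by the abstract. Inverting it---either directly, or by first applying a Grangeat-type differentiation in $(\omega,p)$ to produce the pure $3$D Radon transform of $\langle f(x),\omega^{\odot m}\rangle$---reconstructs $x\mapsto\langle f(x),\omega^{\odot m}\rangle$ for every $\omega\in\Sb^2$. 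Since a symmetric $m$-tensor is uniquely determined by the homogeneous polynomial $\omega\mapsto\langle f(x),\omega^{\odot m}\rangle$, varying $\omega$ recovers $f$ pointwise.

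The main obstacle will be the combination step, because the source-dependent weights $|x-\gamma_j|^{-(k+1)}$ depend on both $j$ and $x$: a naive pointwise linear-algebra solve against the generic tensors would require values of $f$ that we do not yet have, while an $x$-independent combination of the $\Rc_j^{(k)}$'s leaves an $x$-dependent residual weight inside the integral. Producing the clean weighted Radon transform of $\langle f,\omega^{\odot m}\rangle$ will therefore demand either a Grangeat-type identity that decouples the source-dependent weights from the tensor pairings---using the smooth dependence $\gamma_j(\omega,p)$ and the rotational structure $\omega\times\cdot$---or an explicit inversion formula for the weighted Radon transform together with a kernel-freeness verification. Working out the precise coefficients and verifying invertibility is the technically most delicate portion of the argument.
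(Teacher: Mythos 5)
Your overall strategy (restrict to directions in a fixed plane $H_{\omega,p}$, use the Kirillov--Tuy sources $\gamma_1,\dots,\gamma_{m+1}$ to generate enough rank-one symmetric tensors, reduce to a Radon-type transform, invert, polarize) is the same paradigm the paper follows, and your frame-rotation step extracting the line integrals $\Lc_j^{(k)}(\xi)$ is sound. However, there is a genuine gap at exactly the point you flag, and it is not a technicality that can be deferred: the ``appropriate linear combination of the $\Rc_j^{(k)}$'s with coefficients depending only on $\omega$ and the $\gamma_j$'s'' does not exist. The expansion coefficients of $\omega^{\odot m}$ (or of any fixed $\theta^{\odot m}$) in the basis $\bigl\{\omega^{\odot(m-k)}\odot(\omega\times(x-\gamma_j))^{\odot k}\bigr\}$ necessarily depend on $x$, because the directions $(x-\gamma_j)/|x-\gamma_j|$ vary with $x$; combined with the $x$-dependent weights $|x-\gamma_j|^{-(k+1)}$ already inside your $\Rc_j^{(k)}$, no $x$-independent combination produces a single weighted plane integral of $\langle f(x),\omega^{\odot m}\rangle$ with a known weight. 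So the object you propose to invert is never actually constructed.

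The missing idea --- which is how the paper closes this loop --- is to reverse the order of the linear algebra and the inversion, and to kill the $1/|x-\gamma_j|$ weight \emph{before} integrating over directions by a Grangeat-type differentiation of the cone-beam data. Concretely, one applies $\mathcal{L}=\omega_k\,\partial/\partial\xi_k$ to $\Tc_i f(\gamma_j,\xi)$ (using the homogeneous extension of the data in $\xi$); the chain rule converts $\partial/\partial\xi_k$ into $t\,\partial/\partial x_k$ along the ray, and the resulting factor of $t$ turns $t\,dt\,d\xi$ into the area element of $H_{\omega,p}$, so that integrating over $\Sb^2\cap\omega^\perp$ yields $\frac{\partial}{\partial p}$ of the \emph{unweighted} Radon transform of $x\mapsto\langle f(x),(\xi_j)_\alpha^{\odot i}\odot(\xi_j)_\beta^{\odot(m-i)}\rangle$ (after a further $p$-differentiation to account for the motion of $\gamma_j(\omega,p)$). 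Standard Radon inversion then recovers each scalar $\langle f(x),A_{ij}\rangle$, $A_{ij}=(\xi_j)_\alpha^{\odot i}\odot(\xi_j)_\beta^{\odot(m-i)}$, \emph{pointwise}. Only after this inversion does one solve, at each fixed $x$, the $\binom{m+2}{2}\times\binom{m+2}{2}$ linear system expressing $\theta^{\odot m}$ in terms of the $A_{ij}$ (solvable by the Kirillov--Tuy genericity; here the $x$-dependence of the coefficients is harmless because everything is already pointwise), and finally a polarization identity converts $\langle f(x),\theta^{\odot m}\rangle$ for varying $\theta$ into the individual components $f_{i_1\dots i_m}(x)$. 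To repair your write-up you would need to supply this Grangeat identity and swap the order of your last two steps; as written, the proof cannot be completed along the stated route.
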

\begin{theorem} \label{th:Main Theorem 2}
    Let $f \in C_c^\infty(S^1_n)$, supported in $B \subset \mathbb{R}^n$. Assume that a curve $\gamma \subset \mathbb{R}^n$, satisfying the modified Kirillov-Tuy condition, encompasses $B$ and the transverse ray transform $\mathcal{T}_i f$, for $ i = 1, \dots, n - 1$ is known for all lines intersecting the curve $\gamma$. Then the vector field $f$ can be uniquely reconstructed in terms of $\Tc_i f$.
\end{theorem}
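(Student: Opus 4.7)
The plan is to adapt the strategy used for Theorem \ref{th:Main Theorem} to the vector-field setting in arbitrary dimension $n$, exploiting that in this case only two sources per hyperplane are needed. The argument would have three main stages: converting the restricted TRT data at each source into a weighted Radon-type transform on hyperplanes through that source; using the modified Kirillov--Tuy hypothesis to combine data from two sources per hyperplane and thereby extract the standard vectorial Radon transform; and finally inverting the Radon transform componentwise.

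First I would fix $a \in \gamma$ and $\omega \in \mathbb{S}^{n-1}$ and form the spherical integral of $\mathcal{T}_i f(a, \xi)$ over $\xi \in \mathbb{S}^{n-1} \cap \omega^\perp$. Unfolding the definition of $\mathcal{T}_i$ and passing to polar coordinates $x = a + t\xi$ inside the hyperplane $H_{\omega, \omega\cdot a}$ (the Jacobian being $dS(x) = t^{n-2}\, dt\, d\sigma(\xi)$, and using the encompassing property of $\gamma$ to legitimately replace the half-ray integrals by the full-plane integral), this spherical average becomes a weighted integral
\[
F_i(a, \omega) \;=\; \int_{H_{\omega, \omega\cdot a}} \eta_i\!\left(\frac{x - a}{|x - a|}\right)\cdot f(x)\, \frac{dS(x)}{|x - a|^{n-2}},
\]
whose weight at each $x$ is a specific unit vector lying in $(x - a)^\perp$. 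Next, for almost every hyperplane $H_{\omega, p}$ meeting $B$ the modified Kirillov--Tuy condition supplies two sources $\gamma_1, \gamma_2 \in H_{\omega, p} \cap \gamma$ for which $x - \gamma_1$ and $x - \gamma_2$ are linearly independent at almost every $x \in H_{\omega, p} \cap B$. Applying the identity above at $a = \gamma_k$ yields $2(n-1)$ weighted scalar integrals over $H_{\omega, p}$, and since $(x - \gamma_1)^\perp$ and $(x - \gamma_2)^\perp$ together span $\mathbb{R}^n$ pointwise, the collection $\{\eta_i((x-\gamma_k)/|x-\gamma_k|)\}_{i,k}$ contains a frame of $\mathbb{R}^n$ at each such $x$.

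The hard part of the proof will be to remove the radial weight $|x - \gamma_k|^{-(n-2)}$ together with the $x$-dependence of the frame $\eta_i((x-\gamma_k)/|x-\gamma_k|)$, so as to convert the two families of weighted integrals $F_i(\gamma_k, \omega)$ into the unweighted vectorial Radon transform $\vec{R} f(\omega, p) = \int_{H_{\omega, p}} f(x)\, dS(x)$. I plan to accomplish this by a Grangeat-style differentiation of $F_i(a, \omega)$ with respect to the source parameter along $\gamma$ (in direct analogy with the scalar Grangeat formula that removes the $1/|x - a|^{n-2}$ factor in the cone-beam setting), followed by solving the resulting linear system componentwise; the solvability of this system is exactly what the linear-independence clause in the modified Kirillov--Tuy condition guarantees. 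Once $\vec{R} f(\omega, p)$ is in hand for a.e.\ $(\omega, p)$ with $H_{\omega, p} \cap B \neq \emptyset$, the classical inverse Radon transform in $\mathbb{R}^n$ applied to each component $f_j$ delivers the reconstruction of $f$, and uniqueness follows from the standard injectivity of the Radon transform on compactly supported distributions.
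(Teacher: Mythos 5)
Your overall skeleton matches the paper's: average the restricted TRT over directions $\xi\in\mathbb{S}^{n-1}\cap\omega^{\perp}$ at a source to produce weighted hyperplane integrals, use a Grangeat-type differentiation to compensate the polar Jacobian, invoke the modified Kirillov--Tuy condition to get two sources whose associated frames $\{\eta_i\}$ span $\mathbb{R}^n$ pointwise (this is exactly Lemma \ref{lemma: linear independence} in the paper), and finish with Radon inversion. However, there is a genuine gap in the order in which you combine these ingredients. You propose to ``solve the resulting linear system componentwise'' so as to convert the weighted integrals $F_i(\gamma_k,\omega)$ into the \emph{unweighted} vectorial Radon transform $\vec{R}f(\omega,p)$, and only then invert. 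But the frame vectors $\eta_i\bigl((x-\gamma_k)/|x-\gamma_k|\bigr)$ vary with $x$ across the hyperplane, so the linear system whose solvability the Kirillov--Tuy condition guarantees is a \emph{pointwise} system with $x$-dependent coefficients; no constant-coefficient linear combination of the $2(n-1)$ scalars $F_i(\gamma_k,\omega)$ can isolate $\int_{H_{\omega,p}}f_j(x)\,dS(x)$, because the coefficients needed to express $e_j$ in the frame cannot be pulled outside the integrals. The paper avoids this by reversing the two steps: it first applies the Radon inversion formula to each weighted datum $\mathcal{W}(\mathcal{T}^j_i f)$ to recover the pointwise scalars $\left<f(x),(\eta_j)_i\right>$ as functions of $x$, and only afterwards solves the pointwise linear system by Cramer's rule with the $x$-dependent ratios $\Delta_i(v)/\Delta$ (equations \eqref{eq: v with LI vectors}--\eqref{eq: Cramer's rule coefficients}). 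Your argument can be repaired by adopting that order, but as written the central step fails.

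A secondary inaccuracy: the Grangeat-type differentiation that cancels the radial weight is taken in the paper with respect to the direction variable, via $\mathcal{L}=\omega_k\,\partial/\partial\xi_k$ (with the motion of the source point along $\gamma$ entering only as a correction term through $\widetilde{\mathcal{L}}$), not ``with respect to the source parameter along $\gamma$'' as you suggest; differentiating along the curve alone does not remove the $|x-a|^{-(n-2)}$ factor. You should also note that for general $n$ a single application of $\mathcal{L}$ compensates only one power of $t$, so the weight $t^{n-2}$ requires iterating the operator; this point deserves explicit treatment in the vector-field case.
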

The main idea here is to relate the given transverse ray transform data to the Radon transform of $\left<f, \theta^{\odot m}\right>, \mbox{ for any arbitrary }\theta \in \mathbb{R}^n \backslash \left\{0\right\}$, then use the Radon inversion formula to recover the action $\left<f, \theta^{\odot m}\right>, \mbox{ for any arbitrary }\theta \in \mathbb{R}^n \backslash \left\{0\right\}$, which can be used to recover $f$ explicitly. We end this section with a very brief discussion of the Radon transform and some of its properties.

Let $\mathcal{S}(\mathbb{R}^n)$ denote the Schwartz space on $\mathbb{R}^n$. Then for $f \in \mathcal{S}(\mathbb{R}^n)$, the Radon transform is given by the following formula:
\[f^\wedge(\omega,p) = \Rc f(\o,p) = \int_{H_{\omega,p}}f(x)\,ds\]
where $ds$ is the standard volume element on hyperplane $H_{\omega,p}$. And the inversion formula for the Radon transform is given by (see \cite[Theorem 3.6]{Helgason_1999}):
\[f(x) = \left(\mathcal{R}^{-1}f^\wedge\right) (x) = \begin{cases*}
 \frac{1}{2} \frac{1}{{(2 \pi i)}^{n - 1}} \int_{\mathbb{S}^{n - 1}} \frac{\partial^{n-1}}{\partial p^{n-1}} f^\wedge(\omega, x\cdot \omega)\,d\omega & if  $n \text{ is odd}$  \\
 \frac{1}{2 \pi i} \int \int_{\FR \times \mathbb{S}^{n - 1}} \frac{1}{p - x\cdot \omega} \frac{\partial^{n-1}}{\partial p^{n-1}} f^\wedge(\omega, p)\,d\omega \,dp & if $n \text{ is even}$
\end{cases*}\]
where the integral with respect to $p$ (for even $n$) is considered as the Cauchy principal value; for details, please refer \cite[Lemma 3.7]{Helgason_1999}.

Also, the following property of the Radon transform is very useful for our analysis:
\begin{equation} \label{Radon transform property}
    \left(a_i\frac{\partial}{\partial x^i}f(x)\right)^\wedge(\omega,p) = \left<\omega,a\right> \frac{\partial}{\partial p} f^\wedge(\omega,p).
\end{equation}
The upcoming two sections are devoted to proving the Theorem \ref{th:Main Theorem} and \ref{th:Main Theorem 2} respectively. Each section will present some lemmas and propositions needed to prove the respective theorem. 
\section{Proof of Theorem \ref{th:Main Theorem}}\label{sec:proof of th 1}
We start by proving the following proposition, which will play a crucial role in completing the proof of Theorem \ref{th:Main Theorem}. 
\begin{proposition} \label{prop: Main proposition}
For $0 \leq i \leq m$, assume $\Tc_i f$ is known for all lines intersecting the curve $\gamma$ satisfying the conditions of Theorem \ref{th:Main Theorem}. Then, for any $\theta \in \mathbb{R}^3 \backslash \left\{0\right\}$, $\left<f, \theta^{\odot m}\right>$  can be written explicitly in terms of $\Tc_i f$.
\end{proposition}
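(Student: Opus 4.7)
The plan is to reduce the reconstruction of $\langle f, \theta^{\odot m}\rangle$ to the inversion of a weighted Radon-type transform of the scalar $g_\theta(x) := \langle f(x), \theta^{\odot m}\rangle$, using the Kirillov-Tuy genericity of $\gamma$.

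The starting observation is that the $(m+1)$-tuple $\{\mathcal{T}_i f(\gamma_j, \xi)\}_{i=0}^m$ at a fixed line $(\gamma_j, \xi)$ forms the coefficient list of the polynomial $\eta \mapsto \int_0^\infty \langle f(\gamma_j + r\xi), \eta^{\odot m}\rangle\, dr$ in $\eta \in \xi^\perp = \operatorname{span}(\xi_\alpha, \xi_\beta)$. Writing $\eta = a\xi_\alpha + b\xi_\beta$ and expanding via the symmetric binomial theorem, the divergent-beam integral
\[
Dg_\eta(\gamma_j, \xi) := \int_0^\infty \langle f(\gamma_j + r\xi), \eta^{\odot m}\rangle\, dr = \sum_{i=0}^m \binom{m}{i} a^i b^{m-i}\, \mathcal{T}_i f(\gamma_j, \xi)
\]
is TRT-computable for every $\eta$ perpendicular to the line direction $\xi$.

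Next, for $\theta \in \mathbb{R}^3 \setminus \{0\}$, set $\omega = \theta/|\theta|$; for any source $\gamma_j \in \gamma$ and any $\xi$ in the equatorial circle $\mathbb{S}^2 \cap \omega^\perp$, we have $\xi \perp \theta$, so $Dg_\theta(\gamma_j, \xi)$ is available. Integrating $Dg_\theta(\gamma_j, \xi)$ over this equator and passing to polar coordinates in the two-dimensional hyperplane $H := H_{\omega, \omega \cdot \gamma_j}$ through $\gamma_j$ — in which the area element $ds = r\, dr\, d\sigma(\xi)$ absorbs the factor $1/r$ coming from $|x-\gamma_j|^{-1}$ — yields the weighted-Radon identity
\[
\int_{\mathbb{S}^2 \cap \omega^\perp} Dg_\theta(\gamma_j, \xi)\, d\sigma(\xi) = \int_H \frac{g_\theta(x)}{|x - \gamma_j|}\, ds(x),
\]
expressing a weighted Radon transform of $g_\theta$ with weight $1/|x-\gamma_j|$ in terms of TRT data. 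The Kirillov-Tuy condition then provides $m+1$ generic points $\gamma_1, \ldots, \gamma_{m+1} \in \gamma \cap H$ on almost every such hyperplane $H$, producing $m+1$ weighted-Radon identities (one per source) along with the $\xi$-parametrized cone-beam data $Dg_\theta(\gamma_j, \cdot)$.

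The final step combines these identities — using a Grangeat-type derivative along the curve $\gamma$ to cancel the $|x-\gamma_j|^{-1}$ weight — to recover the ordinary Radon transform $\hat g_\theta(\omega, p)$ on the parallel foliation $\{H_{\omega, p}\}_p$ normal to $\theta$; the classical Radon inversion formula then yields $g_\theta(x)$ pointwise. The hard part will be precisely this last step: converting the source-dependent weighted identities into the ordinary Radon transform. The Kirillov-Tuy genericity of order $m$ is exactly what makes the $(m+1)$-fold system from the generic sources non-degenerate, and a careful Grangeat-like differentiation along $\gamma$ is the technical device needed to unwind the $|x-\gamma_j|^{-1}$ weights; carrying out these manipulations explicitly forms the technical heart of the proof.
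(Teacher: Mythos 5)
Your steps (1)--(3) are sound: the binomial expansion $\eta^{\odot m}=\sum_{i}\binom{m}{i}a^{i}b^{m-i}\,\xi_\alpha^{\odot i}\odot\xi_\beta^{\odot(m-i)}$ does make the divergent-beam transform of $g_\eta$ TRT-computable for every $\eta\perp\xi$, and the polar-coordinate identity turning the equatorial integral into the $|x-\gamma_j|^{-1}$-weighted planar integral of $g_\theta$ is correct. The fatal problem is the final step. Since $Dg_\theta(\gamma_j,\xi)$ is only available when $\xi\perp\theta$, your construction yields (weighted) Radon data of $g_\theta$ only over the one-parameter family of planes $\{H_{\omega,p}\}_{p}$ with the single normal direction $\omega=\theta/|\theta|$. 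The Radon inversion formula in $\mathbb{R}^3$ integrates $\partial_p^2\,\widehat{g}(\omega,x\cdot\omega)$ over \emph{all} $\omega\in\mathbb{S}^2$; knowing $\widehat{g_\theta}(\omega,\cdot)$ for one fixed $\omega$ determines only the one-dimensional pushforward of $g_\theta$ along $x\mapsto x\cdot\omega$ and cannot give $g_\theta(x)$ pointwise. Nor can you patch this by letting $\theta$ vary, since for each direction $\omega$ you would then hold the Radon transform of a \emph{different} scalar $g_{|\theta|\omega}$. Relatedly, the Kirillov--Tuy genericity does no work in your scheme: your $m+1$ sources produce $m+1$ weighted integrals of the same scalar over the same plane, not a nondegenerate linear system.

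The missing idea --- and the actual core of the paper's proof --- is a planewise linear-algebra step that precedes any Radon inversion. For each plane $H_{\omega,p}$ and each $x$ in it, the Kirillov--Tuy points give $m+1$ pairwise independent in-plane directions $\xi_j\propto x-\gamma_j$, and the $\binom{m+2}{2}$ tensors $A_{ij}=(\xi_j)_\alpha^{\odot i}\odot(\xi_j)_\beta^{\odot(m-i)}$ are shown to be linearly independent, hence a basis of all symmetric $m$-tensors on $\mathbb{R}^3$. Cramer's rule then writes $\theta^{\odot m}=\sum_{i,j}c_{ij}(\theta)A_{ij}$ for an \emph{arbitrary} $\theta$, so $\left<f(x),\theta^{\odot m}\right>$ is expressed, on every plane, through the scalars $\left<f,A_{ij}\right>$, each of which is accessible from lines lying in that very plane. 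Only after this decomposition does the paper apply the Grangeat-type operator $\mathcal{L}=\omega_k\partial/\partial\xi_k$ (together with a correction term $\widetilde{\mathcal{L}}$ accounting for the motion of the source point as $p$ varies) and invoke the full-sphere Radon inversion. Without such a decomposition your approach cannot assemble Radon data over all plane directions, so the argument does not close.
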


\noindent The proof of this proposition is large and hence divided into small lemmas for better readability. 
\begin{lemma}
    Let $\xi_1, \xi_2, \dots, \xi_{m + 1}$ be a collection of pairwise linearly independent unit vectors in some hyperplane in $\mathbb{R}^3$. Then the following collection of $\binom {m + 2}m$-tensors is linearly independent:
    \[ \left\{ \left\{{(\xi_j)}_\alpha^{\odot {m}}\right\}_{j = 1}^{m + 1}, \left\{{(\xi_j)}_\alpha^{\odot {m - 1}} \odot {(\xi_j)}_\beta\right\}_{j = 1}^m, \dots, \left\{{(\xi_j)}_\alpha \odot {(\xi_j)}_\beta^{\odot {m - 1}} \right\}_{j = 1}^2, {(\xi_1)}_\beta^{\odot {m}}\right\}. \]
\end{lemma}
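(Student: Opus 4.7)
The plan is to use the $2$-dimensional hyperplane $H$ that contains all the $\xi_j$'s to grade the space of symmetric $m$-tensors on $\mathbb{R}^3$ by powers of a unit normal $\omega$ to $H$, and then to reduce the linear independence claim, summand by summand, to the classical Vandermonde-type fact that $N+1$ pairwise linearly independent vectors in a $2$-dimensional space have linearly independent $N$-th symmetric powers.

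First, I would fix a unit normal $\omega$ to $H$. Since each $\xi_j \in H$, the plane $\xi_j^{\perp} \subset \mathbb{R}^3$ contains $\omega$, so together with the unit vector $\nu_j \in H \cap \xi_j^{\perp}$ perpendicular to $\xi_j$ within $H$ the pair $\{\nu_j,\omega\}$ is an orthonormal basis of $\xi_j^{\perp}$. I would work in the frame adapted to $H$, namely $(\xi_j)_\alpha = \nu_j$ and $(\xi_j)_\beta = \omega$, under which every tensor of the collection assumes the concrete form $\nu_j^{\odot(m-k)} \odot \omega^{\odot k}$ for $0 \leq k \leq m$ and $1 \leq j \leq m+1-k$. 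Any other admissible orthonormal frame of $\xi_j^{\perp}$ differs from this adapted one by a $2 \times 2$ rotation, and the reconstruction steps carried out in Proposition~\ref{prop: Main proposition} can absorb such a rotation as an invertible linear change of the TRT data collected at $\xi_j$.

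The orthogonal splitting $\mathbb{R}^3 = H \oplus \mathbb{R}\omega$ induces a direct-sum decomposition of the space of symmetric $m$-tensors on $\mathbb{R}^3$ into summands $\omega^{\odot k} \odot (\text{symmetric } (m-k)\text{-tensors on } H)$ for $k = 0, 1, \ldots, m$, the $k$-th summand having dimension $m + 1 - k$. The tensor $\nu_j^{\odot(m-k)} \odot \omega^{\odot k}$ sits entirely inside the $k$-th summand, and the collection contributes to it exactly $m+1-k$ tensors, $\nu_1^{\odot(m-k)}, \ldots, \nu_{m+1-k}^{\odot(m-k)}$. Therefore the full claim reduces, for each $k$, to linear independence of $N+1$ symmetric $N$-th powers (with $N = m - k$) of the pairwise linearly independent vectors $\nu_1, \ldots, \nu_{N+1}$ inside the $2$-dimensional space $H$.

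This last statement is a standard Vandermonde computation: writing $\nu_j = (p_j, q_j)$ in a basis of $H$ chosen so that all $q_j \neq 0$ (after a harmless rotation inside $H$), the $(N+1) \times (N+1)$ coefficient matrix of $\nu_j^{\odot N}$ in the monomial basis has entries $\binom{N}{a} p_j^{a} q_j^{N - a}$, and its determinant factors as $\prod_{j} q_j^{N} \cdot \prod_{a} \binom{N}{a}$ times the Vandermonde determinant in the ratios $p_j / q_j$. Pairwise linear independence of the $\nu_j$'s (inherited from the $\xi_j$'s) forces these ratios to be distinct, so the determinant is nonzero. The main conceptual step is the identification of the adapted frame; with that in place, the weight decomposition and the Vandermonde factorization are routine bookkeeping.
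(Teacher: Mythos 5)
Your argument is clean \emph{for the frame you choose}, but that frame is not the one in the lemma, and the substitution is exactly where the difficulty of the statement lives. The vectors $(\xi_j)_\alpha=\eta_1(\xi_j)$ and $(\xi_j)_\beta=\eta_2(\xi_j)$ are the specific spherical-coordinate frame of \eqref{eq:orthogonal frame} (note $(\xi_j)_\beta$ always has vanishing last component), not your adapted pair $\{\nu_j,\omega\}$. Replacing one orthonormal frame of $\xi_j^\perp$ by another is harmless only for the \emph{full} set $\{(\xi_j)_\alpha^{\odot i}\odot(\xi_j)_\beta^{\odot(m-i)}\}_{i=0}^{m}$ at a fixed $j$, whose span is frame-independent. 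The lemma, however, concerns a triangular subcollection that keeps $i+1$ tensors at $\alpha$-degree $i$; a $j$-dependent rotation of the frame mixes the $\omega$-grading on which your direct-sum argument rests, so linear independence of this subcollection is \emph{not} a frame-invariant property and cannot be transported from the adapted frame back to the actual one. Concretely, already for $m=1$: take any $\xi_2$, let $H$ be the plane spanned by $\xi_2$ and $(\xi_2)_\beta$, so that its unit normal is $\omega=(\xi_2)_\alpha$, and pick any $\xi_1\in H$ independent of $\xi_2$. Then $(\xi_2)_\alpha\perp\xi_1$, hence $(\xi_2)_\alpha\in\mathrm{span}\{(\xi_1)_\alpha,(\xi_1)_\beta\}$ and the collection $\{(\xi_1)_\alpha,(\xi_2)_\alpha,(\xi_1)_\beta\}$ is linearly \emph{dependent}, while your $\{\nu_1,\nu_2,\omega\}$ is of course independent. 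This degenerate case is precisely why the paper's proof (following \cite[Lemma 3.4]{Microlocal_2021}) must rewrite everything in terms of $(\xi_1)_\alpha,(\xi_2)_\alpha,(\xi_1)_\beta,(\xi_2)_\beta$ and then argue that \emph{at least one} of $\{(\xi_1)_\alpha,(\xi_2)_\alpha,(\xi_1)_\beta\}$, $\{(\xi_1)_\alpha,(\xi_2)_\alpha,(\xi_2)_\beta\}$ is a basis of $\mathbb{R}^3$; your route, by construction, can never see this phenomenon. Your appeal to Proposition \ref{prop: Main proposition} ("the reconstruction can absorb the rotation") is a statement about the downstream application, not a proof of the lemma.

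That said, the adapted-frame statement you do prove --- that $\{\nu_j^{\odot(m-k)}\odot\omega^{\odot k}:0\le k\le m,\ 1\le j\le m+1-k\}$ is a basis of $S^m(\mathbb{R}^3)$, via the grading by $\omega$-degree and the Vandermonde determinant in the two-dimensional slices --- is correct and genuinely more transparent than the paper's case analysis. It could be made into a legitimate alternative path through the argument, but only by restating the lemma in the adapted frame and then adding the (easy) observation that, for each $j$, the $m+1$ data $\langle f,(\xi_j)_\alpha^{\odot i}\odot(\xi_j)_\beta^{\odot(m-i)}\rangle$ determine the $m+1$ quantities $\langle f,\nu_j^{\odot i}\odot\omega^{\odot(m-i)}\rangle$ by an invertible linear map; the decomposition \eqref{eq: theta with LI tensors} would then have to be rewritten with respect to the adapted tensors. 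As a proof of the lemma as stated, the argument has a gap.
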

\begin{proof}
This lemma has been proved in a slightly different setup in \cite[Lemma 3.4]{Microlocal_2021}. In fact, Remark 3.1 of \cite{Microlocal_2021} comments about the setup we are considering. We prefer to give a brief sketch (main steps) of the proof for the sake of completeness.  The main steps of the proof are as follows (details can be found in \cite[Lemma 3.4]{Microlocal_2021}):
\begin{itemize}
\item Using the fact that the vectors $\xi_1, \xi_2, \dots, \xi_{m + 1}$ are pairwise independent, we can write ${(\xi_j)}_\alpha = A_j {(\xi_1)}_\alpha + B_j {(\xi_2)}_\alpha \mbox{ for } j \geq 3$  with  $A_j \neq 0 \neq B_j$ and $A_i B_j - A_j B_i \neq 0 \mbox{ for } i \neq j$. Similarly, we can write ${(\xi_j)}_\beta = A_j {(\xi_1)}_\beta + B_j {(\xi_2)}_\beta$ under same conditions on $j$.
This will convert everything in the terms of four vectors - ${(\xi_1)}_\alpha, {(\xi_2)}_\alpha, {(\xi_1)}_\beta, {(\xi_2)}_\beta$.
\item At least one of the following collections of vectors is linearly independent in $\mathbb{R}^3$. 
\begin{enumerate}
\item $\left\{ {(\xi_1)}_\alpha, {(\xi_2)}_\alpha, {(\xi_1)}_\beta \right\}$
\item $\left\{ {(\xi_1)}_\alpha, {(\xi_2)}_\alpha, {(\xi_2)}_\beta \right\}$.
\end{enumerate} 
This can be seen by the method of contradiction as follows: We know that ${(\xi_1)}_\alpha$ and ${(\xi_2)}_\alpha$ are linearly independent. If possible, assume that ${(\xi_1)}_\beta$ and ${(\xi_2)}_\beta$ both depend on ${(\xi_1)}_\alpha$ and ${(\xi_2)}_\alpha$. In other words, ${(\xi_1)}_\alpha$, ${(\xi_1)}_\beta$, ${(\xi_2)}_\alpha$ and ${(\xi_2)}_\beta$ all lie in the same plane. This implies that $\xi_1$ and $\xi_2$ both lie in the direction perpendicular to that plane generated by ${(\xi_1)}_\alpha$ and ${(\xi_2)}_\alpha$, which is a contradiction to the fact that $\xi_1$ and $\xi_2$ are independent. Hence either $\left\{{(\xi_1)}_\alpha, {(\xi_2)}_\alpha, {(\xi_1)}_\beta\right\}$ or $\left\{{(\xi_1)}_\alpha, {(\xi_2)}_\alpha, {(\xi_2)}_\beta\right\}$ are linearly independent in $\mathbb{R}^3$.\\
This step converts everything into terms of three vectors.
\item Finally, the linear independence of the given tensors follows using exactly the same arguments as in \cite[Lemma 3.4]{Microlocal_2021}. 
\end{itemize}    
\end{proof}
\noindent The above lemma generates a collection of $\binom {m + 2}m$ linearly independent $m$-tensors in $\mathbb{R}^3$ from $(m+1)$-pairwise independent vectors. Also, we know that the dimension of the space of symmetric $m$-tensors in $\mathbb{R}^3$ is $\binom {m + 2}m$. Hence any symmetric $m$-tensor $\theta^{\odot {m}}$ for $\theta \in \mathbb{R}^3 \backslash \left\{0\right\}$ can be written as follows :
\begin{align} \label{eq: theta with LI tensors} 
\begin{split}
    \theta^{\odot {m}} &= \sum_{i = 0}^m \sum_{j = 1}^{i + 1} c_{ij}(\theta) {(\xi_j)}_\alpha^{\odot i} \odot {(\xi_j)}_\beta^{\odot (m-i)} \\
    &= \sum_{i = 0}^m \sum_{j = 1}^{i + 1} c_{ij}(\theta) A_{ij},
\end{split}
\end{align}
where \[A_{ij} = {(\xi_j)}_\alpha^{\odot i} \odot {(\xi_j)}_\beta^{\odot (m-i)}.\]
Observe that any symmetric $m$-tensor in $\FR^3$ can be written as a vector with $\binom{m + 2}{2}$ components. Then using Cramer's rule for the above system of $\binom{m + 2}{2}$ linear equations, we get the coefficients as follows:
\begin{equation} \label{3}
    c_{ij}(\theta) = \frac{\Delta_{ij}(\theta)}{\Delta}
\end{equation}
where 
\[ \Delta_{ij}(\theta) = \det \hspace{1mm}\left(A_{01} \hspace{1mm} A_{11} \hspace{1mm} A_{12} \hspace{1mm} \cdots \hspace{1mm} A_{i(j-1)} \hspace{1mm} \theta^{\odot {m}} \hspace{1mm} A_{i(j+1)} \hspace{1mm} \cdots \hspace{1mm} A_{m(m+1)}\right) \]
and
\[ \Delta = \det \hspace{1mm}\left(A_{01} \hspace{1mm} A_{11} \hspace{1mm} A_{12} \hspace{1mm} \cdots \hspace{1mm} A_{i(j-1)} \hspace{1mm} A_{ij} \hspace{1mm} A_{i(j+1)} \hspace{1mm} \cdots \hspace{1mm} A_{m(m+1)}\right). \]
Also note that for $n = 3$, the condition that a collection of $(\nu(m, n) = m + 1)$-vectors are generic is equivalent to the condition that these vectors are pairwise independent \cite[Remark 1]{Microlocal_2018}. With this preparation, we are ready to start with the proof of the Proposition \ref{prop: Main proposition}, and on the way, we will prove one more important lemma.
\begin{proof}[Proof of Proposition \ref{prop: Main proposition}]
 Let $f$ and $\gamma$ be as in Theorem \ref{th:Main Theorem}. For a fixed hyperplane $H_{\omega,p}$ and a point $x \in H_{\omega,p}$, the Kirillov-Tuy condition will generate a collection of pairwise independent vectors $x - \gamma_1,\dots,x - \gamma_{m + 1}$. For $1 \leq j \leq m+1$, we can find unit vector $\xi_j$ and a scalar $t_j$ such that $t_j \xi_j = x - \gamma_j$ and the vectors in the  collection $ \displaystyle\left\{\xi_j : 1 \leq j \leq m + 1 \right\}$ are pairwise independent. Then from our assumptions, the following integrals are known:
 \begin{equation}
     \mathcal{T}^j_if(\gamma_0, \xi_j) = \int_0^\infty \left<f(\gamma_0 + t \xi_j), {(\xi_j)}_\alpha^{\odot i} \odot {(\xi_j)}_\beta^{\odot (m-i)}\right> \,dt.
 \end{equation}
 Also, relations \eqref{eq: theta with LI tensors} and \eqref{3} will give
\begin{align}\label{9}
\begin{split}
\left<f(x), \theta^{\odot m}\right> &= \sum_{i = 0}^m \sum_{j = 1}^{i + 1} \frac{\Delta_{ij}(\theta)}{\Delta} \left<f(x), {(\xi_j)}_\alpha^{\odot i} \odot {(\xi_j)}_\beta^{\odot (m-i)}\right> \\
&= \sum_{i = 0}^m \sum_{j = 1}^{i + 1} \frac{\Delta_{ij}(\theta)}{\Delta} \left<f(x), A_{ij}\right>.
\end{split}
\end{align}
Since $A_{ij}$ is completely known for a given $x$, which we can use to compute $\displaystyle \frac{\Delta_{ij}(\theta)}{\Delta}$ for $1 \leq j \leq i + 1$, $0 \leq i \leq m$, and for a given $\theta$. From equation \eqref{9}, we observe that the proof of the proposition is done if we can express $\displaystyle \left<f(x), A_{ij}\right>$ in terms of known restricted transverse ray transform. This we prove in the form of the following lemma:
\begin{lemma} \label{lemma: relation with given data}
    Let $f \mbox{ and } \gamma$ be as in Theorem \ref{th:Main Theorem}. Fix a parametrization $\gamma = \gamma(\lambda)$ for the curve $\gamma$. Further, assume $H_{\omega, p}$ be a fixed hyperplane and $x \in H_{\omega, p}$. Then for $0 \leq i \leq m$ and some unit vector $\xi$, we can write $\left<f(x), {\xi}_\alpha^{\odot i} \odot {\xi}_\beta^{\odot (m-i)}\right>$ in terms of $\mathcal{T}_if (\gamma_0, \xi)$, where $\gamma_0 \in \gamma \cap H_{\omega, p}$ and ${\xi}_\alpha$ and ${\xi}_\beta$ are orthonormal vectors associated to $\xi$.
\end{lemma}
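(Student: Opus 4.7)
The strategy is to reduce the pointwise recovery of $\langle f(x), \xi_\alpha^{\odot i} \odot \xi_\beta^{\odot(m-i)}\rangle$ to an application of the Radon inversion formula stated in Section~\ref{Sec:Notations}. Since a single line integral cannot deliver a pointwise value, we will use the TRT data along the whole family of lines lying in $H_{\omega,p}$ and emanating from source points on $\gamma \cap H_{\omega,p}$, as made available by the Kirillov-Tuy condition, and then convert that restricted TRT data into the Radon transform of a suitable scalar built from $f$.

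Concretely, parametrize $\gamma$ by $\gamma(\lambda)$ and fix a point $\gamma_0 = \gamma(\lambda_0) \in \gamma \cap H_{\omega,p}$. Write $\xi = (x - \gamma_0)/|x - \gamma_0| \in \Sb^{n-1}\cap \omega^\perp$ and let $\{\xi_\alpha, \xi_\beta\}$ be the associated orthonormal pair, so that the target quantity is the pointwise value at $x$ of the scalar function $h(y) := \left\langle f(y), \xi_\alpha^{\odot i}\odot \xi_\beta^{\odot(m-i)}\right\rangle$ (which is fixed once $\xi$ is fixed). The key step is a Grangeat-type identity. Starting from $\mathcal{T}_i f(\gamma_0, \eta) = \int_0^\infty \left\langle f(\gamma_0 + t\eta), \eta_\alpha^{\odot i}\odot \eta_\beta^{\odot(m-i)}\right\rangle dt$ for $\eta \in \Sb^{n-1}\cap\omega^\perp$, and using the extension $\widetilde{\mathcal T}_i f(x,\xi) = |\xi|^{m-1}\mathcal T_i f(x,\xi/|\xi|)$ (so that derivatives in $\xi$ and $\gamma_0$ are well-defined), I would integrate over the great circle $\Sb^{n-1}\cap \omega^\perp$ together with appropriate directional derivatives, turning the line integrals into an integral over the half-plane in $H_{\omega,p}$ through $\gamma_0$. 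Using property \eqref{Radon transform property}, which converts directional derivatives into $\partial_p$ on the Radon side, and combining the $m+1$ components $\mathcal T_0 f,\dots,\mathcal T_m f$ at each direction with coefficients adapted to the target tensor $\xi_\alpha^{\odot i}\odot \xi_\beta^{\odot(m-i)}$, this produces an expression for $\partial_p^{k}\Rc h(\omega,p)\big|_{p=\langle \omega,\gamma_0\rangle}$ purely in terms of $\mathcal T_i f(\gamma_0,\cdot)$. The Kirillov-Tuy condition then supplies the corresponding data for every hyperplane meeting $B$, and the Radon inversion formula recovers $h(x)=\left\langle f(x), \xi_\alpha^{\odot i}\odot \xi_\beta^{\odot(m-i)}\right\rangle$ pointwise.

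The main obstacle is precisely the direction-dependence of the TRT weight: for $\eta \neq \xi$, the integrand $\left\langle f(y), \eta_\alpha^{\odot i}\odot \eta_\beta^{\odot(m-i)}\right\rangle$ is \emph{not} the function $h$ we want to integrate, so the TRT data does not immediately give the cone-beam transform of the scalar $h$. Overcoming this requires expressing the fixed tensor $\xi_\alpha^{\odot i}\odot \xi_\beta^{\odot(m-i)}$ in the $\eta$-adapted frame and absorbing the resulting angular factors into the Grangeat identity, so that the bookkeeping of the $m+1$ TRT components across varying $\eta$ correctly isolates the Radon transform of $h$. This algebraic bookkeeping, together with compatibility of the local parametrizations $\gamma(\lambda)$ provided by Kirillov-Tuy, is the technical heart of the argument, while the inversion itself is then a direct invocation of the formula for $\Rc^{-1}$ recalled in Section~\ref{Sec:Notations}.
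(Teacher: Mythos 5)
Your overall strategy --- differentiate the restricted TRT in the source/direction variables, average over the circle of in-plane directions $\mathbb{S}(\omega)$, convert the result into $\partial_p$-derivatives of a Radon transform via \eqref{Radon transform property}, and finish with the Radon inversion formula --- is the same Grangeat-type route the paper takes. But the step you yourself call the ``technical heart'' is not carried out, and the fix you sketch for it cannot work. You propose to obtain the Radon transform of the \emph{fixed-weight} scalar $h(y)=\left\langle f(y),\xi_\alpha^{\odot i}\odot\xi_\beta^{\odot(m-i)}\right\rangle$ by expanding the fixed tensor $\xi_\alpha^{\odot i}\odot\xi_\beta^{\odot(m-i)}$ in the $\eta$-adapted frame and recombining the components $\mathcal{T}_0f,\dots,\mathcal{T}_mf$ at each in-plane direction $\eta$. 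This is impossible in general: the tensors $\eta_\alpha^{\odot j}\odot\eta_\beta^{\odot(m-j)}$, $j=0,\dots,m$, span only the $(m+1)$-dimensional space $\mathrm{Sym}^m(\eta^\perp)$, whereas the target lies in $\mathrm{Sym}^m(\xi^\perp)$, and for $\eta\neq\pm\xi$ these subspaces intersect only in multiples of $(\xi\times\eta)^{\odot m}$ (already for $m=1$ one has $\xi_\alpha\notin\eta^\perp$ for generic $\eta$). Hence no linear combination of the TRT data in direction $\eta$ yields the line integral of $h$ along that ray, and the cone-beam transform of $h$ is simply not available from the data.

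The paper avoids this dead end by never fixing the weight inside the Grangeat identity: it keeps the ray-dependent weight (constant along each ray from $\gamma_0$, varying from ray to ray), so that the circle average of $\mathcal{L}(\mathcal{T}_if)$ with $\mathcal{L}=\omega_k\frac{\partial}{\partial\xi_k}$ yields $\frac{\partial}{\partial p}$ of a \emph{weighted} Radon transform of the components of $f$; a second $p$-differentiation, corrected by $\mathcal{\widetilde{L}}=\frac{\partial\lambda}{\partial p}\mathcal{L}\frac{\partial}{\partial\lambda}$ to account for the motion of $\gamma_0=\gamma(\lambda(p))$ as $p$ varies (a mechanism you allude to only as ``compatibility of the local parametrizations''), puts the data into the form $\frac{\partial^2}{\partial p^2}[\cdot]^\wedge$ required by the three-dimensional inversion formula. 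The reassembly of a fixed-direction quantity happens only afterwards and pointwise at $x$, in Proposition \ref{prop: Main proposition}, using the $m+1$ Kirillov-Tuy directions through $x$ --- not continuously over $\mathbb{S}(\omega)$. To make your argument work you would need to restate the intermediate object of the lemma as this weighted transform, as the paper does, rather than as $\mathcal{R}h$ for a fixed $\xi$.
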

\begin{proof}
Let us use the parametrization $x = \gamma_0 + \xi t$ for $\FR^3$ with $t \geq 0$. Then we have:
\[\mathcal{T}_if(\gamma_0, \xi) = \int_0^\infty \left<f(\gamma_0 + t \xi), {\xi}_\alpha^{\odot i} \odot {\xi}_\beta^{\odot (m-i)}\right> \,dt .\]
Let $\mathbb{S}(\omega) = \left\{\xi \in \mathbb{S}^2 : |\xi - \gamma_0| = 1 \mbox{ and } \left<\xi, \omega\right> = 0\right\}$. Apply the differential operator $\mathcal{L} = \omega_k \frac{\partial}{\partial \xi_k}$ to $\mathcal{T}_if$ and then integrate over $\mathbb{S}(\omega)$ to get:
    \begin{align} \label{eq: operator L}
        \int_{\mathbb{S} (\omega)} \mathcal{L}(\mathcal{T}_i f(\gamma_0, \xi)) \,d\omega(\xi) &= \int_{\mathbb{S} (\omega)} \omega_k \frac{\partial}{\partial \xi_k} \left(\int_0^\infty \left<f(\gamma_0 + \xi t), {\xi}_\alpha^{\odot i} \odot {\xi}_\beta^{\odot (m-i)}\right> \,dt \right)\,d\omega(\xi)\nonumber\\
        &= \int_{\mathbb{S} (\omega)} \int_0^\infty \omega_k \frac{\partial}{\partial \xi_k} \left(\left<f(\gamma_0 + \xi t), {\xi}_\alpha^{\odot i} \odot {\xi}_\beta^{\odot (m-i)}\right>\right) \,dt \,d\omega(\xi)\nonumber\\
        &= \int_{\mathbb{S} (\omega)} \int_0^\infty t \omega_k \frac{\partial}{\partial x_k} \left(\left<f(\gamma_0 + \xi t), {\xi}_\alpha^{\odot i} \odot {\xi}_\beta^{\odot (m-i)}\right>\right) \,dt \,d\omega(\xi)\nonumber\\
        &= \int_{H_{\omega, p}} \omega_k \frac{\partial}{\partial x_k} \left(\left<f(\gamma_0 + \xi t), {\xi}_\alpha^{\odot i} \odot {\xi}_\beta^{\odot (m-i)}\right>\right) \,ds\nonumber\\
        &= \frac{\partial}{\partial p} \int_{H_{\omega, p}} \left<f(\gamma_0 + \xi t), {\xi}_\alpha^{\odot i} \odot {\xi}_\beta^{\odot (m-i)}\right> \,ds\nonumber \quad \text{(using \eqref{Radon transform property})}\\
        &= \frac{\partial}{\partial p} \left[\left<f(\gamma_0 + \xi t), {\xi}_\alpha^{\odot i} \odot {\xi}_\beta^{\odot (m-i)}\right>\right]^\wedge (\omega, p).
    \end{align}
Now consider the planes parallel to $H_{\omega, p}$ and differentiate the above equation with respect to $p$ to get
    \begin{align} \label{eq: operator L 2}
    \begin{split}
        \frac{\partial}{\partial p} \int_{\mathbb{S} (\omega)} \mathcal{L}(\mathcal{T}_i f(\gamma_0, \xi)) \,d\omega(\xi) &= \frac{\partial}{\partial p} \left\{\frac{\partial}{\partial p} \left[\left<f(\gamma_0 + \xi t), {\xi}_\alpha^{\odot i} \odot {\xi}_\beta^{\odot (m-i)}\right>\right]^\wedge (\omega, p)\right\}\\
        &= \frac{\partial^2}{{\partial p}^2} \left[\left<f(\gamma_0 + \xi t), {\xi}_\alpha^{\odot i} \odot {\xi}_\beta^{\odot (m-i)}\right>\right]^\wedge (\omega, p)\\
        &\qquad + \frac{\partial}{\partial p} \left[\frac{\partial \lambda}{\partial p} \frac{\partial}{\partial \lambda}\left<f(\gamma(\lambda) + \xi t), {\xi}_\alpha^{\odot i} \odot {\xi}_\beta^{\odot (m-i)}\right>\right]^\wedge (\omega, p).
    \end{split}
    \end{align}
    Following similar steps as done above to achieve \eqref{eq: operator L} with $\mathcal{\widetilde{L}} = \frac{\partial \lambda}{\partial p} \mathcal{L} \frac{\partial}{\partial \lambda}$, we get
    \begin{equation} \label{eq: operator L tilde}
        \int_{\mathbb{S} (\omega)} \mathcal{\widetilde{L}}(\mathcal{T}_i f(\gamma_0, \xi)) \,d\omega(\xi) = \frac{\partial}{\partial p} \left[\frac{\partial \lambda}{\partial p} \frac{\partial}{\partial \lambda}\left<f(\gamma_0 + \xi t), {\xi}_\alpha^{\odot i} \odot {\xi}_\beta^{\odot (m-i)}\right>\right]^\wedge (\omega, p)
    \end{equation}
    From equations \eqref{eq: operator L 2} and \eqref{eq: operator L tilde}, we get
    \begin{align} \label{eq: final equation}
        \frac{\partial^2}{{\partial p}^2} \left[\left<f(\gamma_0 + \xi t), {\xi}_\alpha^{\odot i} \odot {\xi}_\beta^{\odot (m-i)}\right>\right]^\wedge (\omega, p) &= \frac{\partial}{\partial p} \int_{\mathbb{S} (\omega)} \mathcal{L}(\mathcal{T}_i f(\gamma_0, \xi)) \,d\omega(\xi) \nonumber\\
        &\qquad - \int_{\mathbb{S} (\omega)} \mathcal{\widetilde{L}}(\mathcal{T}_i f(\gamma_0, \xi)) \,d\omega(\xi)
    \end{align}
    To compactify the notations, let us give a notation to the right-hand side of the above equation as follows:
    $$\Wc(\Tc_i f) := \frac{\partial}{\partial p} \int_{\mathbb{S} (\omega)} \mathcal{L}(\mathcal{T}_i f(\gamma_0, \xi)) \,d\omega(\xi_j)
     - \int_{\mathbb{S} (\omega)} \mathcal{\widetilde{L}}(\mathcal{T}_i f(\gamma_0, \xi)) \,d\omega(\xi)$$
   Since $\Wc(\Tc_i f)$ is completely known from the given data, hence left-hand side of equation \eqref{eq: final equation} is also known. Finally, the Radon inversion formula gives 
    \begin{equation} \label{eq: Radon inversion}
        \left<f(x), {\xi}_\alpha^{\odot i} \odot {\xi}_\beta^{\odot (m-i)}\right> = -\frac{1}{8\pi^2} \int_{\mathbb{S}^2}\Wc(\Tc_i f)\,d\omega. 
    \end{equation}
    This completes the proof of our lemma.
\end{proof}
\noindent Now applying the above lemma to each element of collection A, we get 
\[\left<f(x), A_{ij}\right> = -\frac{1}{8\pi^2} \int_{\mathbb{S}^2}\Wc(\Tc^j_i f)\,d\omega.\]
This finishes the proof of our Proposition \ref{prop: Main proposition}.
\end{proof}
\noindent Next, we state an algebraic lemma that we will prove in the Appendix Section \ref{sec:appendix} below. This lemma is required to complete the proof of the Theorem \ref{th:Main Theorem}.
\begin{lemma} \label{algebraic lemma}
    Let $f$ be a symmetric $m$-tensor field in $\mathbb{R}^3$ and $\theta_1, \dots, \theta_m \in \mathbb{R}^3 \backslash \left\{0\right\}$. For $1 \leq k \leq m$, define 
    \[ J_k^m = \left\{(j_1, \dots, j_k) : 1 \leq j_i \leq m \mbox{ for } i = 1, \dots, k \mbox{ and } j_1 < \dots < j_k \right\}. \]
    Then we have 
    \begin{equation} \label{4}
        \left<f, (\theta_1, \dots, \theta_m)\right> = \sum_{k = 1}^m \frac{{(-1)}^{m - k}}{m!}\left( \sum_{J_k^m} \left<f, {\left(\theta_{j_1} + \dots + \theta_{j_k}\right)}^{\odot m}\right> \right).
    \end{equation}
\end{lemma}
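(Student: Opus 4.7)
This identity is the classical polarization identity for symmetric multilinear forms, and my plan is to prove it by direct expansion and an inclusion–exclusion argument. Introduce the auxiliary symmetric $m$-linear form
\[
F(\theta_1,\ldots,\theta_m) := \left<f,\,\theta_1 \odot \theta_2 \odot \cdots \odot \theta_m\right>,
\]
so that the left-hand side of \eqref{4} is $F(\theta_1,\ldots,\theta_m)$ and every summand on the right-hand side is a diagonal evaluation of the form $F(\zeta,\ldots,\zeta)$ with $\zeta=\theta_{j_1}+\cdots+\theta_{j_k}$. The task is then to recover the off-diagonal value $F(\theta_1,\ldots,\theta_m)$ from these diagonal evaluations.

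First, I would expand each diagonal term by multilinearity: for any subset $S=\{j_1,\ldots,j_k\} \subseteq [m] := \{1,\ldots,m\}$,
\[
F\!\left(\sum_{i \in S}\theta_i,\ldots,\sum_{i \in S}\theta_i\right) \;=\; \sum_{\phi : [m] \to S} F\bigl(\theta_{\phi(1)},\ldots,\theta_{\phi(m)}\bigr),
\]
so the right-hand side of \eqref{4} becomes a triple sum indexed by $k$, by $k$-element subsets $S \subseteq [m]$, and by maps $\phi : [m] \to S$.

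Next, I would swap the order of summation and instead group by the underlying map $\phi : [m] \to [m]$. A given $\phi$ contributes in the triple sum precisely for those $S$ with $S \supseteq \mathrm{Im}(\phi)$ and $|S|=k$. Writing $r := |\mathrm{Im}(\phi)|$, the number of $k$-subsets of $[m]$ containing $\mathrm{Im}(\phi)$ is $\binom{m-r}{k-r}$. Hence the total coefficient of $F(\theta_{\phi(1)},\ldots,\theta_{\phi(m)})$ on the right-hand side of \eqref{4} equals
\[
\frac{1}{m!}\sum_{k=r}^{m}(-1)^{m-k}\binom{m-r}{k-r} \;=\; \frac{(-1)^{m-r}}{m!}\,(1-1)^{m-r},
\]
which vanishes for $r < m$ and equals $1/m!$ when $\phi$ is a bijection. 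The only surviving maps $\phi$ are therefore the permutations $\sigma \in S_m$, and by symmetry of $F$ each of the $m!$ contributions equals $F(\theta_1,\ldots,\theta_m)$; summing them with weight $1/m!$ recovers the left-hand side of \eqref{4}.

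The only non-routine step is the reindexing in the second paragraph above: one must correctly count, for each fixed $\phi$, how many $k$-subsets of $[m]$ contain $\mathrm{Im}(\phi)$. Once this combinatorial bookkeeping is set up, the proof collapses to the elementary binomial identity $\sum_{j=0}^{N}(-1)^{j}\binom{N}{j} = (1-1)^N = 0$ for $N \geq 1$, which kills every term other than the permutation contributions.
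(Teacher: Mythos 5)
Your proof is correct, but it follows a genuinely different route from the paper's. The paper first reduces the tensor identity to the scalar polynomial identity
\begin{equation*}
v_1 \cdots v_m \;=\; \sum_{k = 1}^m \frac{(-1)^{m - k}}{m!}\sum_{J_k^m} \left(v_{j_1} + \dots + v_{j_k}\right)^{m},
\end{equation*}
and then proves that identity by induction on $m$: the inductive step splits the right-hand side into pieces according to whether $v_{p+1}$ appears, invokes the induction hypothesis, and disposes of the leftover terms via a multinomial expansion and repeated use of $\sum_{k}(-1)^{k}\binom{N}{k}=0$. You instead work directly with the symmetric $m$-linear form $F$, expand every diagonal evaluation by multilinearity, and regroup by the underlying map $\phi:[m]\to[m]$; the coefficient of each $\phi$ collapses to $\frac{(-1)^{m-r}}{m!}(1-1)^{m-r}$ with $r=|\mathrm{Im}(\phi)|$, leaving only the $m!$ permutations. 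Your counting of the $k$-subsets containing $\mathrm{Im}(\phi)$ as $\binom{m-r}{k-r}$ is correct, and the symmetry of $f$ (needed both to identify $\left<f,(\theta_1,\dots,\theta_m)\right>$ with $F(\theta_1,\dots,\theta_m)$ and to collapse the permutation contributions) is an explicit hypothesis of the lemma. What your approach buys is a non-inductive, one-pass inclusion--exclusion argument that makes the cancellation mechanism transparent and generalizes verbatim to symmetric $m$-linear forms on any vector space where $m!$ is invertible; what the paper's approach buys is that, after the reduction to real numbers, everything is an elementary computation with polynomials, at the cost of a considerably longer bookkeeping in the inductive step. Both are complete proofs.
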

\begin{proof}[Proof of Theorem \ref{th:Main Theorem}]
    We know that $f_{i_1 \dots i_m}(x) = \left<f(x), (e_{i_1}, \dots, e_{i_m})\right>$, where $e_i$'s are one of the three standard basis elements $(1, 0, 0), (0, 1, 0)$ and $(0, 0, 1)$. 
    Therefore, to prove Theorem \ref{th:Main Theorem}, it is sufficient to show that $\left<f, (\theta_1, \dots, \theta_m)\right>$ can be obtained explicitly in terms of given data for an arbitrary collection $\theta_1, \dots, \theta_m \in \mathbb{R}^3 \backslash \left\{0\right\}$. From Lemma \ref{algebraic lemma}, we have
    \begin{align*}
        \left<f(x), (\theta_1, \dots, \theta_m)\right> &= \sum_{k = 1}^m \frac{{(-1)}^{m - k}}{m!}\left( \sum_{J_k^m} \left<f(x), {\left(\theta_{j_1} + \dots + \theta_{j_k}\right)}^{\odot m}\right> \right)\\
        &= \sum_{k = 1}^m \frac{{(-1)}^{m - k}}{m!}\left( \sum_{J_k^m} \left<f(x), {\theta^{\odot m}_{j_1\dots j_k}}\right> \right)
    \end{align*}
    where $\theta_{j_1\dots j_k} = \theta_{j_1} + \dots + \theta_{j_k}$. Then using equations \eqref{9} and \eqref{eq: Radon inversion}, we get
    \begin{equation*}
        \left<f(x), (\theta_1, \dots, \theta_m)\right> = -\frac{1}{8\pi^2}  \sum_{k = 1}^m \frac{{(-1)}^{m - k}}{m!} \left( \sum_{J_k^m} \sum_{i = 0}^m \sum_{j = 1}^{i + 1} \frac{\Delta_{ij}(\theta_{j_1\dots j_k})}{\Delta} \int_{\mathbb{S}^2}\Wc(\Tc^j_i f)\,d\omega \right).
    \end{equation*}
We can now replace $\theta_i$'s with $e_i$'s as required to obtain all the components of the unknown tensor field. This completes the proof of Theorem \ref{th:Main Theorem}.
\end{proof}
\section{Proof of Theorem \ref{th:Main Theorem 2}} \label{sec:proof of th 2} 
Similar to the proof of Theorem \ref{th:Main Theorem}, we start with the proof of a proposition that relates $\left<f, v\right>$, for any $v \in \mathbb{R}^n \backslash \left\{0\right\}$, to the known data $\Tc_i f$.
\begin{proposition} \label{prop: Main proposition 2}
    For $1 \leq i \leq n - 1$, assume $\Tc_i f$ is known for all lines intersecting the curve $\gamma$ satisfying the conditions of Theorem \ref{th:Main Theorem 2}. Then, for any $v \in \mathbb{R}^n \backslash \left\{0\right\}$, $\left<f, v\right>$  can be written explicitly in terms of $\Tc_i f$.
\end{proposition}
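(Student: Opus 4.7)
The strategy mirrors the proof of Proposition \ref{prop: Main proposition}, but specialized to the $m=1$ case in arbitrary dimension $n$, and with the weaker modified Kirillov--Tuy hypothesis which supplies only two curve intersections per hyperplane. Fix $x \in B$ and $v \in \mathbb{R}^n\setminus\{0\}$. Pick a hyperplane $H_{\omega,p}$ through $x$ (with $p = \langle \omega, x\rangle$), apply the modified Kirillov--Tuy condition to obtain $\gamma_1, \gamma_2 \in \gamma \cap H_{\omega,p}$ with $x-\gamma_1, x-\gamma_2$ linearly independent, and set $\xi_j = (x-\gamma_j)/|x-\gamma_j|$ together with its associated orthonormal frame $\{\eta_1^{(j)}, \dots, \eta_{n-1}^{(j)}\}$ built as in \eqref{eq:orthogonal frame}. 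The plan is then to (i) recover each projection $\langle f(x), \eta_i^{(j)}\rangle$ from the restricted data $\mathcal{T}_i f(\gamma_j, \xi_j)$, and (ii) assemble these projections to read off $\langle f(x), v\rangle$ by expressing $v$ in a basis drawn from the $\eta_i^{(j)}$.

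For step (i), I will prove the vector-field analog of Lemma \ref{lemma: relation with given data}: for each $j\in\{1,2\}$, applying $\mathcal{L} = \omega_k\,\partial/\partial\xi_k$ to $\mathcal{T}_i f(\gamma_0, \xi)$, integrating over $\mathbb{S}(\omega)$, and subtracting the chain-rule correction term via $\widetilde{\mathcal{L}} = (\partial\lambda/\partial p)\,\mathcal{L}\,(\partial/\partial\lambda)$ -- exactly as in \eqref{eq: operator L}--\eqref{eq: final equation} but with $\eta_i$ in place of $\xi_\alpha^{\odot i}\odot\xi_\beta^{\odot(m-i)}$ -- produces the second $p$-derivative of the $n$-dimensional Radon transform of $\langle f, \eta_i\rangle$ over $H_{\omega, p}$. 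Inverting the Radon transform via the appropriate odd- or even-$n$ formula recalled in Section \ref{Sec:Notations} then expresses each $\langle f(x), \eta_i^{(j)}\rangle$ explicitly in terms of the restricted transverse ray transform data.

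For step (ii), I need to extract a basis of $\mathbb{R}^n$ from the $2(n-1)$ vectors $\{\eta_i^{(1)}\}_{i=1}^{n-1} \cup \{\eta_i^{(2)}\}_{i=1}^{n-1}$. Since $\{\eta_i^{(j)}\}_{i=1}^{n-1}$ spans $\xi_j^\perp$ and $\xi_1, \xi_2$ are linearly independent, the two hyperplanes $\xi_1^\perp$ and $\xi_2^\perp$ are distinct; hence at least one index $k$ satisfies $\eta_k^{(2)} \notin \xi_1^\perp$, and then $\{\eta_1^{(1)}, \dots, \eta_{n-1}^{(1)}, \eta_k^{(2)}\}$ is a basis of $\mathbb{R}^n$. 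Expanding $v$ in this basis (by Cramer's rule, with coefficients that are completely known once $x, \xi_1, \xi_2$ are fixed) and pairing with $f(x)$ yields $\langle f(x), v\rangle$ as an explicit linear combination of the scalars $\langle f(x), \eta_i^{(j)}\rangle$ recovered in step (i). The main obstacle is largely bookkeeping rather than a new idea: careful tracking of the derivative through the $\xi$-dependent orthonormal frame $\{\eta_i(\xi)\}$ and of the curve parameter $\lambda = \lambda(\xi)$ implicit in $\gamma_0$, so that the Radon-transform derivative identity \eqref{eq: final equation} goes through cleanly in dimension $n$. With this in hand, the basis argument above replaces the combinatorially delicate pairwise-independence lemma needed in the $\mathbb{R}^3$ tensor case, and the proof concludes directly.
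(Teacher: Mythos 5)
Your proposal is correct and follows essentially the same route as the paper: the basis $\{\eta_1^{(1)},\dots,\eta_{n-1}^{(1)},\eta_k^{(2)}\}$ extracted from the two independent directions supplied by the modified Kirillov--Tuy condition is exactly the paper's Lemma \ref{lemma: linear independence}, the Cramer's-rule expansion of $v$ is identical, and the recovery of each $\langle f(x),\eta_i^{(j)}\rangle$ via the operators $\mathcal{L}$, $\widetilde{\mathcal{L}}$ and Radon inversion is the paper's Lemma \ref{lemma: relation with given data 2}. Your explicit remark that the general-$n$ Radon inversion requires the odd/even case distinction is a detail the paper leaves implicit, but otherwise the two arguments coincide.
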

\noindent The following lemma is needed to prove Proposition \ref{prop: Main proposition 2}:
\begin{lemma} \label{lemma: linear independence}
    If $\xi_1$ and $\xi_2$ are linearly independent vectors in $\mathbb{R}^n$, then there exists $l \in \left\{1, 2, \dots, n - 1\right\}$ such that the vectors ${(\eta_1)}_{1}, {(\eta_1)}_{2}, \dots, {(\eta_1)}_{(n - 1)}, {(\eta_2)}_{l}$ are linearly independent in $\mathbb{R}^n$. Here ${(\eta_i)}_{j}$ represents the orthogonal component $\eta_j$ corresponding to $\xi_i$ as defined previously (see \eqref{eq:orthogonal frame}).
\end{lemma}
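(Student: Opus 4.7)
The plan is to exploit the fact that, by the construction in \eqref{eq:orthogonal frame}, for each $i \in \{1,2\}$ the family $\{(\eta_i)_1, (\eta_i)_2, \dots, (\eta_i)_{n-1}\}$ is an orthonormal basis of the hyperplane $\xi_i^\perp \subset \mathbb{R}^n$. In particular, $(\eta_1)_1, \dots, (\eta_1)_{n-1}$ already span an $(n-1)$-dimensional subspace, namely $\xi_1^\perp$, so the $n$ vectors $(\eta_1)_1, \dots, (\eta_1)_{n-1}, (\eta_2)_l$ are linearly independent in $\mathbb{R}^n$ if and only if $(\eta_2)_l \notin \xi_1^\perp$. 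The task therefore reduces to producing a single index $l$ with this property.

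I would argue by contradiction. Suppose no such $l$ exists, so that $(\eta_2)_l \in \xi_1^\perp$ for every $l = 1,\dots,n-1$. Then
\[
\xi_2^\perp \;=\; \operatorname{span}\bigl\{(\eta_2)_1,\dots,(\eta_2)_{n-1}\bigr\} \;\subseteq\; \xi_1^\perp.
\]
Since both subspaces have dimension $n-1$, equality of dimensions forces $\xi_2^\perp = \xi_1^\perp$. Taking orthogonal complements yields $\operatorname{span}\{\xi_2\} = \operatorname{span}\{\xi_1\}$, i.e.\ $\xi_1$ and $\xi_2$ are parallel, contradicting the hypothesis that they are linearly independent. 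Hence at least one $l \in \{1,\dots,n-1\}$ must satisfy $(\eta_2)_l \notin \xi_1^\perp$, which by the observation above gives the desired linear independence.

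There is no substantive obstacle here: the statement is essentially a dimension count encoding the fact that two non-parallel unit vectors in $\mathbb{R}^n$ determine two distinct orthogonal hyperplanes. The only thing to be careful about is to invoke the precise structural fact that each $\{(\eta_i)_j\}_{j=1}^{n-1}$ is an orthonormal basis of $\xi_i^\perp$, which is immediate from \eqref{eq:orthogonal frame}.
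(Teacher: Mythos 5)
Your argument is correct and is essentially the same as the paper's: both proceed by contradiction, using that $\{(\eta_i)_j\}_{j=1}^{n-1}$ spans the hyperplane $\xi_i^\perp$, so that containment of all $(\eta_2)_l$ in $\xi_1^\perp$ would force $\xi_1^\perp = \xi_2^\perp$ and hence $\xi_1 \parallel \xi_2$. Your phrasing via orthogonal complements and a dimension count is a slightly cleaner packaging of the identical idea.
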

\begin{proof}
    We prove this lemma using the method of contradiction. We know that ${(\eta_1)}_{1}, {(\eta_1)}_{2}, \dots,$ ${(\eta_1)}_{(n - 1)}$ are orthogonal vectors in $\mathbb{R}^n$, and hence they are linearly independent. They generate a hyperplane in $\mathbb{R}^n$, call it $H$. Now if possible, assume that all the vectors ${(\eta_2)}_{1}, {(\eta_2)}_{2}, \dots, {(\eta_2)}_{(n - 1)}$ depend on the vectors ${(\eta_1)}_{1}, {(\eta_1)}_{2}, \dots, {(\eta_1)}_{(n - 1)}$ and hence lie in $H$. Since $\xi_1$ is orthogonal to ${(\eta_1)}_{1}, {(\eta_1)}_{2}, \dots, {(\eta_1)}_{(n - 1)}$, it is perpendicular to the hyperplane $H$. For the same reason, $\xi_2$ is also perpendicular to the hyperplane $H$; hence, $\xi_1$ and $\xi_2$ are dependent, which is a contradiction. So there exists $l \in \left\{1, 2, \dots, n - 1\right\}$ such that the vectors ${(\eta_1)}_{1}, {(\eta_1)}_{2}, \dots, {(\eta_1)}_{(n - 1)}, {(\eta_2)}_{l}$ are linearly independent in $\mathbb{R}^n$.
\end{proof}
\noindent Using above lemma, we get $n$ linearly independent vectors in $\mathbb{R}^n$. Hence any vector $v \in \mathbb{R}^n \backslash \left\{0\right\}$ can be written as a linear combination of these $n$-linearly independent vectors as follows :

\begin{equation} \label{eq: v with LI vectors} 
v = c_1 {(\eta_1)}_{1} + \dots c_{n - 1} {(\eta_1)}_{(n - 1)} + c_n {(\eta_2)}_{l},
\end{equation}

\noindent Using Cramer's rule, we get the coefficients as follows :

\begin{equation} \label{eq: Cramer's rule coefficients}
    c_i(v) = \frac{\Delta_i(v)}{\Delta}
\end{equation}
where 
\[ \Delta_i(v) = \det \hspace{1mm}\left({(\eta_1)}_{1} \hspace{1mm} {(\eta_1)}_{2} \hspace{1mm} \cdots \hspace{1mm} {(\eta_1)}_{i - 1} \hspace{1mm} v \hspace{1mm} {(\eta_1)}_{i + 1} \hspace{1mm} \cdots \hspace{1mm} {(\eta_1)}_{n - 1} \hspace{1mm} {(\eta_2)}_{l}\right); \quad i = 1, \dots, n - 1, \]

\[\Delta_n(v) = \det \hspace{1mm}\left({(\eta_1)}_{1} \hspace{1mm} \cdots \hspace{1mm} {(\eta_1)}_{n - 1} \hspace{1mm} v\right)\]

\noindent and
\[ \Delta = \det \hspace{1mm}\left({(\eta_1)}_{1} \hspace{1mm} {(\eta_1)}_{2} \hspace{1mm} \cdots \hspace{1mm} {(\eta_1)}_{n - 1} \hspace{1mm} {(\eta_2)}_{l}\right). \]
\begin{proof}[Proof of Proposition \ref{prop: Main proposition 2}]
Let $f$ and $\gamma$ be as in Theorem \ref{th:Main Theorem 2}. For a fixed hyperplane $H_{\omega,p}$, the modified Kirillov-Tuy condition will generate two linearly independent vectors $x - \gamma_1$ and $x - \gamma_2$. We can find unit vectors $\xi_1, \xi_2$ and scalars $t_1, t_2$ such that $t_j \xi_j = x - \gamma_j; j = 1, 2$ and the vectors $\xi_1$ and $\xi_2$ are linearly independent. Now, relations \eqref{eq: v with LI vectors} and \eqref{eq: Cramer's rule coefficients} will give
\begin{equation}
    \left<f(x), v\right> = \sum_{i = 1}^{n - 1} \frac{\Delta_i(v)}{\Delta} \left<f(x), {(\eta_1)}_{i}\right> + \frac{\Delta_n(v)}{\Delta} \left<f(x), {(\eta_2)}_{l}\right>.
\end{equation}
In the above expression, we know $$\frac{\Delta_i(v)}{\Delta}; 1 \leq i \leq n$$ for a given $x$ and the vector $v$. Hence observe that the Proposition \ref{prop: Main proposition 2} is proved if we can express $\left<f(x), {(\eta_1)}_{i}\right>; i = 1, \dots, n - 1$ and $\left<f(x), {(\eta_2)}_{l}\right>$ in terms of the known restricted TRT. This can be done using the following lemma:
\begin{lemma} \label{lemma: relation with given data 2}
Let $f \mbox{ and } \gamma$ be as in Theorem \ref{th:Main Theorem 2}. Further, assume $H_{\omega, p}$ be a fixed hyperplane and $x \in H_{\omega, p}$. Then we can write $\left<f(x), {(\eta_1)}_{i}\right>; i = 1, \dots, n - 1$ and $\left<f(x), {(\eta_2)}_{l}\right>$ in terms of $\mathcal{T}_if, 1 \leq i \leq n - 1$.  
\end{lemma}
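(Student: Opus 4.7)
The plan is to mirror the proof of Lemma \ref{lemma: relation with given data} essentially verbatim, replacing the tensor contractions $\left<f, \xi_\alpha^{\odot i} \odot \xi_\beta^{\odot(m-i)}\right>$ by the vector contractions $\left<f, \eta\right>$ for each frame element $\eta$ of interest. Fix $x \in H_{\omega,p}$, and let $\gamma_1, \gamma_2 \in \gamma \cap H_{\omega,p}$ be the two points supplied by the modified Kirillov--Tuy condition, with unit directions $\xi_j$ such that $t_j \xi_j = x - \gamma_j$ for $j = 1, 2$. The aim is to recover $\left<f(x), (\eta_1)_i\right>$ from data along rays starting at $\gamma_1$ (for $i = 1, \ldots, n-1$), and $\left<f(x), (\eta_2)_l\right>$ from data along rays starting at $\gamma_2$. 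Since $\left<f, \eta\right>$ is just a scalar function of $x$, it suffices to reconstruct its Radon transform from the given data.

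For a generic base point $\gamma_0 = \gamma(\lambda) \in \gamma \cap H_{\omega,p}$ and for each fixed $i$, parametrize the half-hyperplane by $y = \gamma_0 + t \xi$, $t \ge 0$, $\xi \in \mathbb{S}(\omega) := \{\xi \in \mathbb{S}^{n-1} : \left<\xi, \omega\right> = 0\}$. Using the extended operator $\widetilde{\mathcal{T}}_i f$ from the previous remark to make sense of Cartesian derivatives in $\xi$, apply $\mathcal{L} = \omega_k \frac{\partial}{\partial \xi_k}$ to $\mathcal{T}_i f(\gamma_0, \xi)$ and integrate over $\mathbb{S}(\omega)$. The chain rule converts $\partial/\partial \xi_k$ acting on $f(\gamma_0 + t\xi)$ into $t\, \partial/\partial x_k$, and the $t$-integration combined with the spherical integration over $\mathbb{S}(\omega)$ reassembles into an integral over the hyperplane $H_{\omega,p}$. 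Property \eqref{Radon transform property} then gives a $p$-derivative of $[\left<f(\cdot), \eta_i\right>]^\wedge(\omega, p)$, modulo a term coming from the implicit $(\omega, p)$-dependence of $\gamma_0 = \gamma(\lambda)$. This correction is cancelled by subtracting $\int_{\mathbb{S}(\omega)} \widetilde{\mathcal{L}}(\mathcal{T}_i f)\, d\omega(\xi)$ with $\widetilde{\mathcal{L}} = (\partial \lambda/\partial p)\, \mathcal{L}\, (\partial/\partial \lambda)$, exactly as in \eqref{eq: operator L 2}--\eqref{eq: operator L tilde}, producing a known quantity $\mathcal{W}(\mathcal{T}_i f)$ equal to $\partial_p^2 [\left<f(\cdot), \eta_i\right>]^\wedge(\omega, p)$. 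The Radon inversion formula stated in Section \ref{Sec:Notations} (with its odd-$n$ and even-$n$ branches) then recovers $\left<f(x), \eta_i\right>$ as an explicit integral of $\mathcal{W}(\mathcal{T}_i f)$ over $\mathbb{S}^{n-1}$. Choosing $\gamma_0 = \gamma_1$ reconstructs all $\left<f(x), (\eta_1)_i\right>$, and choosing $\gamma_0 = \gamma_2$ reconstructs $\left<f(x), (\eta_2)_l\right>$.

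The main technical obstacle is careful bookkeeping of two things: the fact that the hyperplane area element in $\mathbb{R}^n$ is $ds = t^{n-2}\, dt\, d\omega(\xi)$, whereas a single application of $\mathcal{L}$ yields only one factor of $t$; and the fact that the frame vector $\eta_i$ itself depends on $\xi$ through the spherical parametrization, so $\partial/\partial \xi_k$ produces an additional contribution $\omega_k\, \partial \eta_i^j/\partial \xi_k\, f_j$. For $n = 3$ the $t$-factors balance automatically, which is exactly the setting of Lemma \ref{lemma: relation with given data}. For $n > 3$ one must either iterate $\mathcal{L}$ the appropriate number of times or employ a suitably weighted variant of $\mathcal{T}_i f$, and verify that the extra $\eta$-derivative terms either vanish after angular integration over $\mathbb{S}(\omega)$ or are absorbed into the $\widetilde{\mathcal{L}}$ correction. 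Once this is done, the argument is a direct transcription of the tensor case, and combining with Proposition \ref{prop: Main proposition 2} completes the reconstruction of $\left<f(x), v\right>$ for every $v \in \mathbb{R}^n \setminus \{0\}$, hence of $f$.
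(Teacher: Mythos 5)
Your proposal is essentially the paper's own proof: the paper disposes of this lemma in a single line, stating that it ``follows the same steps as Lemma \ref{lemma: relation with given data}'' --- i.e., apply $\mathcal{L}=\omega_k\partial/\partial\xi_k$, integrate over $\mathbb{S}(\omega)$, reassemble into a hyperplane integral, use \eqref{Radon transform property}, correct with $\widetilde{\mathcal{L}}$, and invert the Radon transform --- which is exactly your transcription. The two obstacles you flag are genuine but are shared with (and left unaddressed by) the source: the reassembly step in \eqref{eq: operator L} uses $ds=t\,dt\,d\omega(\xi)$, which is the polar-coordinate area element only for a $2$-dimensional hyperplane (so a single application of $\mathcal{L}$ produces the right Jacobian only when $n=3$, and the inversion formula for general $n$ needs $\partial_p^{n-1}$ rather than $\partial_p^{2}$), and the $\xi$-dependence of the frame vectors is silently dropped in the tensor case as well. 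So your write-up is, if anything, more careful than the paper's; a fully rigorous argument for $n>3$ would require actually carrying out the modification you sketch (iterating $\mathcal{L}$ to generate the $t^{n-2}$ factor and tracking the $\partial\eta_i^j/\partial\xi_k$ terms), which neither you nor the paper does in detail.
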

\noindent The proof of Lemma \ref{lemma: relation with given data 2} follows the same steps as Lemma \ref{lemma: relation with given data}, and finally we get,
\begin{equation} 
    \left<f(\gamma_0 + t \xi), {(\eta_j)}_{i}\right> = \mathcal{R}^{-1} (\Wc(\Tc^j_i f))
\end{equation}
for all $i$ and $j$ required.
\noindent This finishes the proof of proposition \ref{prop: Main proposition 2} as well.
\end{proof}
\begin{proof}[Proof of Theorem \ref{th:Main Theorem 2}]
    We know that $f_i = \left<f, e_i\right>$ where $e_i$ is one of the basis elements\\ $\left\{(1, 0, \dots, 0), (0, 1, \dots, 0), \dots, (0, 0, \dots, 1)\right\}$. Also from the previous calculation, we have  
    \begin{equation*}
        \left<f(x), v\right> = \sum_{i = 1}^{n - 1} \frac{\Delta_i(v)}{\Delta} \Rc^{-1}\left\{\Wc(\Tc^1_i f)\right\} + \frac{\Delta_n(v)}{\Delta} \Rc^{-1}\left\{\Wc(\Tc^2_l f)\right\}
    \end{equation*}
    So $f$ can be reconstructed componentwise using $e_i$'s as follows:
    \begin{equation*}
        f_i = \sum_{i = 1}^{n - 1} \frac{\Delta_i(e_i)}{\Delta} \Rc^{-1}\left\{\Wc(\Tc^1_i f)\right\} + \frac{\Delta_n(e_i)}{\Delta} \Rc^{-1}\left\{\Wc(\Tc^2_l f)\right\} ; \quad i = 1, \dots, n.
    \end{equation*}
    This completes the proof of Theorem \ref{th:Main Theorem 2}.
\end{proof}
\section{Appendix} \label{sec:appendix}
As mentioned earlier, this section is completely devoted to the proof of Lemma \ref{algebraic lemma}.
\begin{proof}[Proof of Lemma \ref{algebraic lemma}]
We know that 
\[\theta_1 \odot \dots \odot \theta_m = \frac{1}{m!} \sum_{\pi \in \Pi_m}\theta_{\pi 1} \otimes \dots \otimes \theta_{\pi m}\]
where $\Pi_m$ is the permutation group of $m$ indices and $\pi k$ represents the image of index $k$ under $\pi$.
This gives 
\[\left<f, \theta_1 \odot \dots \odot \theta_m\right> = \frac{1}{m!} \sum_{\pi \in \Pi_m} \left<f, \theta_{\pi 1} \otimes \dots \otimes \theta_{\pi m}\right>.\]
Now since $f$ is symmetric, we have 
\[\left<f, \theta_{\pi 1} \otimes \dots \otimes \theta_{\pi m}\right> = \left<f, \theta_{1} \otimes \dots \otimes \theta_{m}\right>\]
for any $\pi \in \Pi_m$. This gives 
\[\left< f, (\theta_1, \dots, \theta_m)\right> = \left<f, \theta_{1} \otimes \dots \otimes \theta_{m}\right> = \left<f, \theta_1 \odot \dots \odot \theta_m\right>\] 
We have the following properties for symmetric tensor product $\odot$:
\begin{itemize}
\item ${(\theta_1 + \theta_2)}^{\odot m} = \sum_{q = 0}^m \binom{m}{q} \theta_1^{\odot {(m - q)}} \odot \theta_2^{\odot q}$.
\item ${(\theta_1 + \dots + \theta_l)}^{\odot m} = \sum_{q_1 + \dots + q_l = m} \binom{m}{q_1, \dots, q_l} \theta_1^{\odot q_1} \odot \dots \odot \theta_l^{\odot q_l}$
\end{itemize}
where $\binom{m}{q_1, \dots, q_l} = \frac{m!}{q_1!\,\dots\,q_l!}$. \\

\noindent To obtain the required identity \eqref{4}, it is sufficient to show the following: 
    \begin{equation} \label{5}
        \theta_1 \odot \dots \odot \theta_m = \sum_{k = 1}^m \frac{{(-1)}^{m - k}}{m!}\left( \sum_{J_k^m} {\left(\theta_{j_1} + \dots + \theta_{j_k}\right)}^{\odot m} \right).
    \end{equation}
For notational convenience, we prove the following equivalent version for real numbers:
    \begin{equation} \label{6}
        v_1 \times \dots \times v_m = \sum_{k = 1}^m \frac{{(-1)}^{m - k}}{m!}\left( \sum_{J_k^m} {\left(v_{j_1} + \dots + v_{j_k}\right)}^m \right)
    \end{equation}
where $v_i$'s are real numbers. From \eqref{6}, we get \eqref{5} using the properties of the symmetric tensor product mentioned above. This result generalizes the following two special cases for general $m$.
\begin{itemize}
        \item For $m = 2$, we have 
        \[ v_1 \times v_2 = \frac{1}{2!} \left({(v_1 + v_2)}^2 - {v_1}^2 - {v_2}^2\right). \]
        \item For $m = 3$, we have 
        \[ v_1 \times v_2 \times v_3 = \frac{1}{3!} \left({(v_1 + v_2 + v_3)}^3 - {(v_1 + v_2)}^3 - {(v_1 + v_3)}^3 - {(v_2 + v_3)}^3 + {v_1}^3 + {v_2}^3 + {v_3}^3\right). \]
\end{itemize}
We will prove it using mathematical induction on $m$. From above, we see that the result is true for $m = 1, 2, 3$. Let us assume that the result is true for $m = p$, that is,
\begin{equation*}
v_1 \times \dots \times v_p = \sum_{k = 1}^p \frac{{(-1)}^{p - k}}{p!}\left( \sum_{J_k^p} {\left(v_{j_1} + \dots + v_{j_k}\right)}^p \right).
\end{equation*}
Then we will show that the result is true for $m = p + 1$. Consider the right hand side of \eqref{6} for $m = p + 1$,
\begin{align*}
I &= \sum_{k = 1}^{p + 1} \frac{{(-1)}^{p + 1 - k}}{{(p + 1)}!}\left( \sum_{J_k^{p + 1}} {\left(v_{j_1} + \dots + v_{j_k}\right)}^{p + 1} \right)\\
&= \sum_{k = 1}^{p + 1} \frac{{(-1)}^{p + 1 - k}}{{(p + 1)}!} \left( \sum_{J_{k - 1}^p} {\left(v_{j_1} + \dots + v_{j_{k - 1}} + v_{p + 1}\right)}^{p + 1} + \sum_{J_k^p} {\left(v_{j_1} + \dots + v_{j_k}\right)}^{p + 1} \right)\\
&= \sum_{k = 1}^{p + 1} \frac{{(-1)}^{p + 1 - k}}{{(p + 1)}!} \left( \sum_{J_{k - 1}^p} {\left(v_{j_1} + \dots + v_{j_{k - 1}} + v_{p + 1}\right)}^{p + 1} \right) + \sum_{k = 1}^{p + 1} \frac{{(-1)}^{p + 1 - k}}{{(p + 1)}!} \left( \sum_{J_k^p} {\left(v_{j_1} + \dots + v_{j_k}\right)}^{p + 1} \right)\\
&= I_1 + I_2
\end{align*}
where 
\[ I_1 = \sum_{k = 1}^{p + 1} \frac{{(-1)}^{p + 1 - k}}{{(p + 1)}!} \left( \sum_{J_{k - 1}^p} {\left(v_{j_1} + \dots + v_{j_{k - 1}} + v_{p + 1}\right)}^{p + 1} \right) \]
and
\[ I_2 = \sum_{k = 1}^{p + 1} \frac{{(-1)}^{p + 1 - k}}{{(p + 1)}!} \left( \sum_{J_k^p} {\left(v_{j_1} + \dots + v_{j_k}\right)}^{p + 1} \right).\]
\begin{align*}
I_1 &= \frac{{(-1)}^p}{{(p + 1)}!} v_{p + 1}^{p + 1} + \sum_{k = 2}^{p + 1} \frac{{(-1)}^{p + 1 - k}}{{(p + 1)}!} \left( \sum_{J_{k - 1}^p} {\left(v_{j_1} + \dots + v_{j_{k - 1}} + v_{p + 1}\right)}^{p + 1} \right)\\
&= \frac{{(-1)}^p}{{(p + 1)!}} v_{p + 1}^{p + 1} + \sum_{k = 1}^p \frac{{(-1)}^{p - k}}{{(p + 1)}!} \left( \sum_{J_k^p} {\left(v_{j_1} + \dots + v_{j_k} + v_{p + 1}\right)}^{p + 1} \right), \mbox{ replacing } k - 1 \mbox{ by } k \\
&= \frac{{(-1)}^p}{{(p + 1)!}} v_{p + 1}^{p + 1} + \sum_{k = 1}^p \frac{{(-1)}^{p - k}}{{(p + 1)}!} \sum_{J_k^p} \left( \sum_{q = 0}^{p + 1} \binom{p + 1}{q} {\left(v_{j_1} + \dots + v_{j_k}\right)}^{p + 1 - q} v_{p + 1}^q \right)\\
&= \frac{{(-1)}^p}{{(p + 1)!}} v_{p + 1}^{p + 1} + \sum_{k = 1}^p \frac{{(-1)}^{p - k}}{{(p + 1)}!} \sum_{J_k^p} {\left(v_{j_1} + \dots + v_{j_k}\right)}^{p + 1} + \sum_{k = 1}^p \frac{{(-1)}^{p - k}}{p!} \sum_{J_k^p} {\left(v_{j_1} + \dots + v_{j_k}\right)}^p v_{p + 1}\\
&+ \sum_{k = 1}^p \frac{{(-1)}^{p - k}}{{(p + 1)}!} \sum_{J_k^p} \left( \sum_{q = 2}^{p + 1} \binom{p + 1}{q} {\left(v_{j_1} + \dots + v_{j_k}\right)}^{p + 1 - q} v_{p + 1}^q \right) + \sum_{k = 1}^p \frac{{(-1)}^{p - k}}{{(p + 1)}!} \sum_{J_k^p} v_{p + 1}^{p + 1}\\
&= \sum_{k = 0}^p \frac{{(-1)}^{p - k}}{{(p + 1)}!} \binom{p}{k} v_{p + 1}^{p + 1} - I_2 + \sum_{k = 1}^p \frac{{(-1)}^{p - k}}{{(p + 1)}!} \sum_{J_k^p} \left( \sum_{q = 2}^{p + 1} \binom{p + 1}{q} {\left(v_{j_1} + \dots + v_{j_k}\right)}^{p + 1 - q} v_{p + 1}^q \right)\\
&+ v_1 \times \dots \times v_{p + 1}, \left(\mbox{using induction hypothesis and }  \sum_{J_k^p} 1 = \binom{p}{k}\right)\\
&= - I_2 + I_3 + v_1 \times \dots \times v_{p + 1}, \left(\mbox{using } \sum_{k = 0}^p {(-1)}^{p - k} \binom{p}{k} = 0\right)
\end{align*}
where 
\[ I_3 = \sum_{k = 1}^p \frac{{(-1)}^{p - k}}{{(p + 1)}!} \sum_{J_k^p} \left( \sum_{q = 2}^{p + 1} \binom{p + 1}{q} {\left(v_{j_1} + \dots + v_{j_k}\right)}^{p + 1 - q} v_{p + 1}^q \right). \]
Putting this value of $I_1$ into $I$, we get the following:\[I = I_3 + v_1 \times \dots \times v_{p + 1}.\]
To complete the proof of the lemma, we need to show that $I_3 = 0$. But showing $I_3 = 0$ is equivalent to showing the following:
\[\sum_{k = 1}^p {(-1)}^{p - k} \sum_{J_k^p} {\left(v_{j_1} + \dots + v_{j_k}\right)}^r = 0 \mbox{ for } r = 1, \dots, p - 1.\]
To prove this, we will need the following multinomial expansion :
\[\left(x_1 + \dots + x_l\right)^n = \sum_{q_1 + \dots + q_l = n}\binom{n}{q_1, \dots, q_l}x_1^{q_1}\dots x_l^{q_l},\]
where
\[\binom{n}{q_1, \dots, q_l} = \frac{n!}{q_1!\,\dots\,q_l!}.\]
Consider 
\begin{align}\label{7}
\begin{split}
\sum_{k = 1}^p {(-1)}^{p - k} \sum_{J_k^p} {\left(\sum_{s = 1}^k v_s\right)}^r &= {(-1)}^{(p - 1)} \left(\sum_{s = 1}^p v_s^r\right) + {(-1)}^{(p - 2)} \sum_{i_1 \neq i_2} \left(\sum_{q_1 + q_2 = r} \binom{r}{q_1, q_2} v_{i_1}^{q_1} v_{i_2}^{q_2}\right)\\
&+ {(-1)}^{(p - 3)} \sum_{i_1 \neq i_2 \neq i_3} \left(\sum_{q_1 + q_2 + q_3 = r} \binom{r}{q_1, q_2, q_3} v_{i_1}^{q_1} v_{i_2}^{q_2} v_{i_3}^{q_3} \right) + \dots\\
&+ {(-1)}^1 \sum_{i_1 \neq i_2 \neq \dots \neq i_{q - 1}} \left(\sum_{q_1 + q_2 + \dots + q_{p - 1} = r} \binom{r}{q_1, q_2, \dots, q_{p - 1}} v_{i_1}^{q_1} v_{i_2}^{q_2} \dots v_{i_{p-1}}^{q_{p-1}} \right)\\
&+ {(-1)}^0 \sum_{i_1 \neq i_2 \neq \dots \neq i_p} \left(\sum_{q_1 + q_2 + \dots + q_p = r} \binom{r}{q_1, q_2, \dots, q_p} v_{i_1}^{q_1} v_{i_2}^{q_2} \dots v_{i_p}^{q_p} \right) \\
&= T_1 + \dots + T_p,  
\end{split}
\end{align}
where
\begin{align*}
T_2 &= {(-1)}^{(p - 2)} \sum_{i_1 \neq i_2} \left(\sum_{q_1 + q_2 = r} \binom{r}{q_1, q_2} v_{i_1}^{q_1} v_{i_2}^{q_2}\right)\\
&= {(-1)}^{(p - 2)} \sum_{i_1 \neq i_2} \left(v_{i_1}^r + v_{i_2}^r + \sum_{q_1 + q_2 = r, q_1 \neq 0 \neq q_2} \binom{r}{q_1, q_2} v_{i_1}^{q_1} v_{i_2}^{q_2}\right)\\
&= (p - 1) {(-1)}^{(p - 2)} \left(\sum_{s = 1}^p v_s^r\right) + {(-1)}^{(p - 2)} \sum_{i_1 \neq i_2} \left(\sum_{q_1 + q_2 = r, q_1 \neq 0 \neq q_2} \binom{r}{q_1, q_2} v_{i_1}^{q_1} v_{i_2}^{q_2}\right),
\end{align*}
\begin{align*}
T_3 &= {(-1)}^{(p - 3)} \sum_{i_1 \neq i_2 \neq i_3} \left(\sum_{q_1 + q_2 + q_3 = r} \binom{r}{q_1, q_2, q_3} v_{i_1}^{q_1} v_{i_2}^{q_2} v_{i_3}^{q_3} \right)\\
&= {(-1)}^{(p - 3)} \sum_{i_1 \neq i_2 \neq i_3} \left(v_{i_1}^r + v_{i_2}^r + v_{i_3}^r + \sum_{q_1 + q_2 = r, q_1 \neq 0 \neq q_2} \binom{r}{q_1, q_2} \left\{v_{i_1}^{q_1} v_{i_2}^{q_2} + v_{i_1}^{q_1} v_{i_3}^{q_3} + v_{i_2}^{q_2} v_{i_3}^{q_3}\right\} \right.\\
&\qquad + \left.\sum_{q_1 + q_2 + q_3 = r, q_1 \cdot q_2 \cdot q_3 \neq 0} \binom{r}{q_1, q_2, q_3} v_{i_1}^{q_1} v_{i_2}^{q_2} v_{i_3}^{q_3}\right)\\
&= {(-1)}^{(p - 3)} \binom{p - 1}{2} \left(\sum_{s = 1}^p v_s^r\right) + (p - 1) {(-1)}^{(p - 3)} \sum_{i_1 \neq i_2} \left(\sum_{q_1 + q_2 = r, q_1 \neq 0 \neq q_2} \binom{r}{q_1, q_2} v_{i_1}^{q_1} v_{i_2}^{q_2}\right)\\
&\qquad + {(-1)}^{(p - 3)} \sum_{i_1 \neq i_2 \neq i_3} \left(\sum_{q_1 + q_2 + q_3 = r, q_1 \cdot q_2 \cdot q_3 \neq 0} \binom{r}{q_1, q_2, q_3} v_{i_1}^{q_1} v_{i_2}^{q_2} v_{i_3}^{q_3} \right).
\end{align*}
    Similarly, we can expand every $T_k$ for $k = 4, \dots, p$. Putting it back to \eqref{7}, we get
    \begin{align*}
        \sum_{k = 1}^p {(-1)}^{p - k} \sum_{J_k^p} {\left(\sum_{s = 1}^k v_s\right)}^r &= \left(\sum_{s = 1}^p v_s^r\right) \sum_{k = 0}^{p - 1} \binom{p - 1}{k} {(-1)}^{(p - 1 - k)}\\
        &+ \sum_{i_1 \neq i_2} \left(\sum_{q_1 + q_2 = r, q_1 \neq 0 \neq q_2} \binom{r}{q_1, q_2} v_{i_1}^{q_1} v_{i_2}^{q_2}\right) \times \sum_{k = 0}^{p - 2} \binom{p - 2}{k} {(-1)}^{(p - 2 - k)} + \cdots\\
        &+ \sum_{i_1 \neq i_2 \neq \dots \neq i_{p - 1}}\left(\sum_{q_1 + \dots + q_{p - 1} = r, \prod_{l = 1}^{p - 1}q_l \neq 0} \binom{r}{q_1, q_2, \dots, q_{p - 1}} v_{i_1}^{q_1} \dots v_{i_{p - 1}}^{q_{p - 1}} \right.\\
        &\qquad \left.\times \sum_{k = 0}^1 \binom{1}{k} {(-1)}^{1 - k}\right)\\
        &+ \sum_{i_1 \neq i_2 \neq \dots \neq i_p}\left(\sum_{q_1 + \dots + q_p = r, \prod_{l = 1}^p q_l \neq 0} \binom{r}{q_1, q_2, \dots, q_p} v_{i_1}^{q_1} \dots v_{i_p}^{q_p}\right)\\
        &= 0,
    \end{align*}
where the last equality follows from the fact that every term except the last one has a factor of the form ${(1 - l)}^l$ and the last term is $0$ because $r \leq (p - 1)$.

Using this equality, we have $I_3 = 0$, so our result is also true for $m = p + 1$, which completes the proof of our lemma by mathematical induction.
\end{proof}

\section{Acknowledgements}\label{sec:acknowledge}
 RM was partially supported by SERB SRG grant No. SRG/2022/000947. CT was supported by the PMRF fellowship from the Government of India.
\bibliographystyle{amsplain}
\bibliography{reference}

\end{document}